\newcommand\Cuntz{\mathbf S_m} 
\newcommand\A{\mathcal A}  
\newcommand\Z{\mathbb Z} 
\newcommand\C{\mathbb C} 
\newcommand\F{\mathbf F} 
\newcommand\FS{\mathbf FS_m} 
\newcommand\Cc{{\mathcal C}} 
\newcommand\HH{{\mathcal H\mathcal H}} 
\newcommand\CC{\mathcal C\mathcal C} 
\newcommand\HC{\mathcal H\mathcal C} 
\newcommand\BC{\mathcal B\mathcal C} 
\newcommand\ZC{\mathcal Z\mathcal C} 
\newcommand{\SB}[2]{{\mathcal B}^{#1}({#2},{#2}')} 
\newcommand{\SC}[2]{{\mathcal C}^{#1}({#2},{#2}')} 
\newcommand{\SZ}[2]{{\mathcal Z}^{#1}({#2},{#2}')} 
\renewcommand\d{\delta}
\newcommand{\ptp}{\widehat{\otimes}}  
\newcommand\oh{\hat\otimes}
\newcommand{\et}{{\bf x}}  
\newcommand{\chain}{a_1 \otimes \cdots \otimes a_{n+1}}
\newcommand{\subchain}[2]{a_{#1} \otimes \cdots \otimes a_{#2}}
\newcommand{\achain}{a_1 \otimes \cdots \otimes a_{n+1}}
\newcommand{\asubchain}[2]{a_{#1} \otimes \cdots \otimes a_{#2}}
\newcommand{\defeq}{:=}
\def\AC{{\mathcal{A}_S}} 
\newcommand\order{{\rm \bf l}}
\def\one{\hbox{$\mathbf 1$}}
\def\zero{\hbox{$z_0$}}
\def\palpha{p_{\alpha}}
\def\qbeta{q_{\beta}}
\def\pa#1{p_{\alpha_{#1}}}
\def\qb#1{q_{\beta_{#1}}}
\def\p#1{p_{\alpha_{#1}}}
\def\q#1{q_{\beta_{#1}}}
\def\ds#1#2{d^n_{#1}s^n_{#2}}
\def\sd#1#2{s^{n-1}_{#1}d^{n-1}_{#2}}
\def\sdplusds{s^{n-1}d^{n-1}+d^n s^n}
\def\stwo{r}
\def\sdplusdstwo{\stwo^{n-1}d^{n-1}+d^n \stwo^n}
\def\sfg{s_{V}} 
\def\rhov{\bar{\rho}} 
\def\etv{{\mathbf \omega}} 
\newcommand{\subvchain}[2]{\etv_{#1} \otimes \cdots \otimes \etv_{#2}}
\def\sdplusdsv{\sfg^{n-1}d^{n-1}+d^n \sfg^n}
\newcommand{\dt}[1]{{\sf #1}}
\newcommand{\dif}{\delta}
\newcommand{\tp}{{\mathop{\scriptstyle{\otimes}}}}
\renewcommand{\tp}{\otimes}
\newcommand{\cyc}{\operatorname{\bf t}} 
\newcommand{\Co}[3][]{{\mathcal {#2}_{#1}^{#3}}}
\newcommand{\Ho}[3][]{{\mathcal {#2}^{#1}_{#3}}}
\title[The cyclic and simplicial cohomology of the Cuntz semigroup algebra]
{Cyclic and simplicial cohomology of the Cuntz semigroup algebra}
\author{Fr\'ed\'eric Gourdeau}
\author{Michael C. White}
\date{January 28, 2010}
\newtheorem{Proposition}{Proposition}[section]
\newtheorem{Theorem}[Proposition]{Theorem}
\newtheorem{Lemma}[Proposition]{Lemma}
\newtheorem{Definition}[Proposition]{Definition}
\theoremstyle{remark}
\newtheorem{remark}[Proposition]{Remark}
\begin{document}
\baselineskip = 17pt

  \begin{abstract}
The main objective of this paper is to determine the simplicial and cyclic cohomology groups of the Cuntz semigroup algebra $\ell^1(\Cuntz)$. In order to do so, we first establish some general results which can be used when studying simplicial and cyclic cohomology of Banach algebras in general.
We then turn our attention to $\ell^1(\Cuntz)$, showing that the cyclic cohomology groups of degree $n$ vanish when $n$ is odd and are one-dimensional when $n$ is even ($n\ge 2$). Using the Connes-Tzygan exact sequence, these results are used to show that the simplicial cohomology groups of degree $n$ vanish for $n\ge 1$. We also determine the simplicial and cyclic cohomology of the tensor algebra of a Banach space, a class which includes the algebra on the free semigroup on $m$-generators $\ell^1(\FS)$.
\end{abstract}

\subjclass[2000]{Primary 46H20, 43A20; Secondary 16E40}

\keywords{Cohomology, Cyclic cohomology, Simplicial cohomology,
 Banach algebra, Cuntz semigroup algebra.}

\maketitle


\section{Introduction}




The Cuntz algebras were introduced in~\cite{[Cu]} as new examples of amenable C*-algebras,
and so, from the beginning, they were studied for their homological properties.
The homological properties of Cuntz $\ell^1$-algebras are the central topic of this paper,
and the main results are obtained in Theorem~\ref{cyclic cohomology} and Theorem~\ref{simplicial vanishes}.
The original construction of Cuntz algebras views them as acting on infinite $m$-ary tree structures,
and so the free semigroup on $m$-generators~$\FS$ has always been in clear view.
We consider the Banach algebra $\ell^1(\FS)$ in Section \ref{section_Free_Algebra} where we determine the cyclic and simplicial cohomology of tensor algebras (Theorem \ref{tensor-cyclic} and Theorem \ref{tensor-sim}), a class which includes $\ell^1(\FS)$.
The original construction of Cuntz has been generalized in a number of directions:
firstly to allow for an infinite number of generators,
and then to act on digraph structures more complicated than rooted trees.
These constructions gave rise to the Cuntz-Krieger algebras~\cite{[CK]}.
Later these examples were generalized to higher rank graph structures by Robertson and Steger~\cite{[RS]}.

At the same time as the above process of generalization was going on,
there was a parallel process of simplification and unification.
Many of the original constructions were unified by a construction
which proceeded via intermediate steps which considered semigroups (and corresponding inverse semigroups) acting on sets.
The $\ell^1$ and $L^1$ algebras of these inverse semigroups were then completed to give the desired C*-algebras
which serve as the generalizations of the original Cuntz algebras.
We recommend the reader to consult Paterson's book~\cite{[Pa]}, for details of these constructions.

It is these intermediate $\ell^1$ inverse semigroup algebras, which give rise to the Cuntz C*-algebra, that are the topic of this paper.
As noted above, the Cuntz C*-algebras are amenable and so have trivial cohomology in dual modules,
and so in particular they have trivial simplicial cohomology.
We will show in Theorem~\ref{simplicial vanishes} that the $\ell^1$-algebras corresponding to the Cuntz C*-algebras do not have trivial simplicial cohomology,
in particular they have non-inner derivations into their duals and so cannot be amenable.
Further, we show in Theorem~\ref{simplicial vanishes} that these algebras have trivial simplicial cohomology in dimensions greater than one,
and perhaps more interestingly, that they have the same cyclic cohomology as $\C$, in dimensions greater than~zero.

The examples considered in this paper are generalizations of the case of the bicyclic semigroup algebra, for which the authors obtained more detailed results in~\cite{[GW2010]}.

\section{Background and definitions}

As this paper considers simplicial and cyclic cohomology, rather than Hochschild cohomology with more general coefficients, and as our notation is essentially that of \cite{[CGW]} (and very close to that of~\cite{[GJW]}),
we shall be brief in this section.

Let $\A$ be a Banach algebra and regard $\A'$\/, the dual space of $\A$, as a Banach $\A$-bimodule in the usual way.
For $n\geq 0$, $\SC{n}{\A}$ denotes the space of \dt{$n$-cochains}, $\SZ{n}{\A}$ the subspace of \dt{$n$-cocycles}, and $\SB{n}{\A}$ the subspace of \dt{$n$-coboundaries}.
Note that by convention, $\SC{0}{\A}=\A'$ and
$\SC{n}{\A}=0$ for negative $n$. We shall write
$\HH^n(\A)$ for the quotient space $\SZ{n}{\A}/\SB{n}{\A}$, the $n$th \dt{simplicial cohomology group} of~$\A$.

We specify some notation. For the remainder of this section, $n\ge 0$. An $n$-cochain is a bounded $n$-linear map $T:\A^n\to \A'$, and the \dt{Hochschild coboundary operator} $\dif^n:\SC{n}{\A}\to\SC{n+1}{\A}$ is defined by
\begin{eqnarray*}
(\d^n T)(a_1,\ldots,a_{n+1})(a_{n+2}) &=& T(a_2, a_3,  \ldots, a_{n+1})(a_{n+2}a_1) \\
& & + \sum_{j=1}^{n} (-1)^j T(a_1, a_2, \ldots, a_j a_{j+1}, \ldots, a_{n+1})(a_{n+2}) \\
& & { } + (-1)^{n+1}  T(a_1, \ldots, a_n)(a_{n+1}a_{n+2})
 \end{eqnarray*}
where $a_1,\ldots,a_{n+2} \in \A$.
We shall sometimes omit the superscript and write $\dif$ for $\dif^n$.

For each $n\ge 0$, elements of $\SC{n}{\A}$ may be regarded as bounded linear functionals on the space $\Ho{C}{n}(\A,\A)\defeq A^{\ptp n+1}$, the $(n+1)$-fold completed projective tensor product of $\A$; the coboundary operator $\delta^n:\SC{n}{\A}\to\SC{n+1}{\A}$ is then the adjoint of the bounded linear operator $d^n:\Ho{C}{n+1}(\A,\A)\to\Ho{C}{n}(\A,\A)$, defined on \dt{elementary tensors} $\et=\asubchain 1{n+2}$ by
 \begin{eqnarray*}
d^n (\et) & = &
 a_2\tp \cdots\tp a_{n+1} \tp a_{n+2}a_1 \\
& & + \sum_{j=1}^{n+1} (-1)^j a_1\tp \cdots\tp a_ja_{j+1}\tp \cdots\tp a_{n+2}.
 \end{eqnarray*}
Note that from now on, we shall simply consider any bounded linear map on $\Cc_n(\A,\A)$ as defined once its value on elementary tensors is determined.

We denote by $d^n_i$, $0\le i \le n+1$, the maps $d^n_i : \Cc_{n+1}(\A,\A) \to  \Cc_{n}(\A,\A)$ given by
 \begin{eqnarray*}
d^n_0(\et)&=& \subchain 2 n \otimes a_{n+2} a_1,\\
d^n_i(\et)&=& (-1)^i \subchain 1 i \subchain {i+1} {n+2}, \ i=1, \ldots, n\\
d^n_{n+1}(\et)&=& (-1)^{n+1}  \subchain 1 {n+1} a_{n+2},
\end{eqnarray*}
so that $d^n=\sum_{i=0}^{n+1} d^n_i$.

Simplicial cohomology is closely linked
to \dt{cyclic cohomology}, which we now introduce.
Denote by $\cyc$ the \dt{signed cyclic shift operator} on the simplicial
chain complex:
\begin{equation*}\label{eq:dfn-cyc-shift}
\cyc(\achain) = (-1)^n (a_{n+1}\otimes\asubchain{1}{n}).
\end{equation*}
By abuse of notation, we also write $\cyc$ for the adjoint
operator on the simplicial \emph{cochain} complex.
The $n$-cochain $T$ (in $\SC{n}{\A}$) is called \dt{cyclic} if
$\cyc T = T$, i.e. if $T(\cyc x)=T(x)$ for all $x\in \Cc_n(\A,\A)$, and the linear space of all cyclic $n$-cochains is denoted by~$\CC^n(\A)$.

It is well known that the cyclic cochains $\CC^n(\A)$ form a subcomplex of
$\SC{n}{\A}$, that is $\d\left(\CC^n(\A)\right) \subseteq \CC^{n+1}(\A)$, and
this allows one to define cyclic versions of the spaces defined above, denoted here by $\ZC^n(\A)$, $\BC^n (\A)$ and $\HC^n (\A)$.
Under certain conditions on the algebra $\A$ (see \cite{[He92]}), the cyclic and simplicial cohomology groups are connected via the
Connes-Tzygan long exact sequence
\begin{equation*}\label{eq:CTseq}
\cdots\to \HH^n(\A) \xrightarrow{B} \HC^{n-1}(\A) \xrightarrow{S} \HC^{n+1}(\A) \xrightarrow{I} \HH^{n+1}(\A) \to \cdots
\end{equation*}
where the maps $B$, $S$ and $I$ all behave naturally with respect to algebra homomorphisms. The reader is referred to~\cite{[He92]} for more details.

Finally, as some of the results we obtain can be used in both simplicial and cyclic cohomology, we shall need to move between these two settings. To do so, the following definition will be useful (where $I$ is the identity map).

\begin{Definition}\label{cyclically equivalent} Two chains $x, y\in \Cc_n(\A,\A)$ are \dt{cyclically equivalent} if
\[ x-y \in (I-\cyc)\Cc_n(\A,\A).\]
\end{Definition}

\section{Constructing a contracting homotopy: a general method}

In this section, starting from any map $\rho: \A\to \A\oh\A$ (where $\A$ is a Banach algebra), we define $s^n : \Cc_{n}(\A,\A)\to\Cc_{n+1}(\A,\A)$ and study general properties of $s^n$, $n\ge 0$ and $\sdplusds$, $n\ge 1$. We then describe a general method, mostly useful in cyclic cohomology, to work with $\sdplusds$ in attempting to build a contracting homotopy.

\begin{Definition}\label{def-s} Let $\rho: \A\to \A\oh\A$ and $n\ge 0$. We define
$s^n_i : \Cc_n(\A,\A)\to\Cc_{n+1}(\A,\A)$, for $i=1, \ldots, n+1$, by
\[ s^n_i(\chain) = (-1)^{i} \sum_{j=1}^{\infty}\subchain 1{i-1}\otimes u_j^{(i)}\otimes v_j^{(i)} \otimes \subchain {i+1}{n+1}\, ,\]
where $\rho(a_i)=\sum_{j=1}^{\infty} u_j^{(i)}\otimes v_j^{(i)}$. We shall write this more concisely in the form
\[ s^n_i(\chain) = (-1)^{i} (\subchain 1{i-1}\otimes \rho(a_i) \otimes \subchain {i+1}{n+1})\, .\]
We then define $s^n : \Cc_{n}(\A,\A)\to\Cc_{n+1}(\A,\A)$ by
\begin{equation*}\label{eq:dfn-s}
s^n = \sum_{k=1}^{n+1} s^n_k.
\end{equation*}
\end{Definition}


In cohomology, we need to have the adjoint of $s^n$.
\begin{Definition} Dualising $s^n$, $n\ge 0$, we define $\sigma^n :
\Co{C}{n+1}(\A,\A')\to\Co{C}{n}(\A,\A')$ on $T\in\Co{C}{n+1}(\A,\A')$ by
\begin{equation*}
\sigma^n T(x) = T (s^n(x)),\qquad x\in \Cc_n(\A,\A).
\end{equation*}
\end{Definition}

In our work, it will be useful to consider maps $\rho: \A\to \A\oh\A$ which are left inverses to the {\em multiplication map} $\pi : \A \oh \A \to \A$, defined on elementary tensors by $\pi(a \otimes b)=ab$, for $a$, $b \in \A$. Nevertheless, it will be useful to have results which hold for any $\rho$. The next lemma shows that, for any $\rho$, the map $\sigma^n$ operates in cyclic cohomology: if $T$ is cyclic, then $\sigma^n(T)$ is also cyclic.

\begin{Lemma}\label{s_is_syclic}
Let $\rho: \A\to \A\oh\A$ and let $s^n$ and $s^n_k$, $n\ge 0$, $k=1, \ldots, n+1$ be as above. Then, for all $x\in \Cc_n(\A,\A)$, we have
\begin{enumerate}
  \item $s^n_1(\cyc x) = \cyc^2 s^n_{n+1}(x)$,
  \item $s^n_k(\cyc x) = \cyc s^n_{k-1}(x) \ k=2, \ldots, n+1$, and
  \item $T(s^{n}(\cyc x)=T(s^n x)$ if $T\in \Cc^{n+1}(\A,\A')$ is cyclic.
\end{enumerate}
\end{Lemma}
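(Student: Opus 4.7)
The plan is to reduce everything to a careful but routine sign-counting computation on an elementary tensor $x = a_1 \otimes \cdots \otimes a_{n+1}$. The only conceptual point is that $\cyc$ acting on $\Cc_n(\A,\A)$ carries sign $(-1)^n$, while $\cyc$ acting on $\Cc_{n+1}(\A,\A)$ (which contains the image of $s^n$) carries sign $(-1)^{n+1}$, and both will feature when comparing the two sides of (i) and (ii).

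For (ii), I would fix $k$ with $2\le k\le n+1$ and expand $s^n_k(\cyc x)$. Since the entry in position $k$ of $\cyc x$ is $a_{k-1}$, inserting $\rho(a_{k-1})$ there and collecting the two signs from $s^n_k$ and $\cyc$ yields $(-1)^{n+k}$ times the tensor $a_{n+1}\otimes a_1 \otimes \cdots \otimes a_{k-2}\otimes \rho(a_{k-1}) \otimes a_k \otimes \cdots \otimes a_n$. On the other side, $s^n_{k-1}(x)$ is a chain of length $n+2$ ending in $a_{n+1}$; a single application of $\cyc$ (with sign $(-1)^{n+1}$) brings $a_{n+1}$ to the front, producing the same tensor with exactly the same overall sign $(-1)^{n+k}$.

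Claim (i) is the analogous computation but requires two cyclic shifts: since $s^n_{n+1}(x)$ ends with the two factors $u^{(n+1)}_j \otimes v^{(n+1)}_j$ coming from $\rho(a_{n+1})$, applying $\cyc$ twice brings them to the front in the correct order. The two shifts contribute a combined sign $(-1)^{2(n+1)}=1$, which combines with the $(-1)^{n+1}$ from the definition of $s^n_{n+1}$ to match the $(-1)^{n+1}$ coming from $s^n_1(\cyc x) = (-1)^{n+1}\rho(a_{n+1})\otimes a_1 \otimes \cdots \otimes a_n$.

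Finally, (iii) is formal given (i) and (ii). Summing the identities and splitting off the $k=1$ term gives
\[
s^n(\cyc x) = \cyc^2 s^n_{n+1}(x) + \cyc\!\!\sum_{j=1}^{n} s^n_j(x) = \cyc\, s^n(x) + (\cyc^2 - \cyc)\,s^n_{n+1}(x).
\]
Pairing with a cyclic cochain $T$ and using $T\cyc = T$ kills the leading $\cyc$ and also kills the defect, since $T(\cyc^2 - \cyc) = T - T = 0$; this yields $T(s^n(\cyc x)) = T(s^n(x))$. The main obstacle is nothing beyond disciplined sign bookkeeping, particularly the distinction between the two cyclic-shift signs $(-1)^n$ and $(-1)^{n+1}$.
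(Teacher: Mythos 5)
Your proposal is correct and follows essentially the same route as the paper: a direct sign computation on elementary tensors giving $s^n_1(\cyc x)=\cyc^2 s^n_{n+1}(x)$ and $s^n_k(\cyc x)=\cyc s^n_{k-1}(x)$, with (iii) deduced formally from (i) and (ii). Your explicit rearrangement $s^n(\cyc x)=\cyc s^n(x)+(\cyc^2-\cyc)s^n_{n+1}(x)$ merely spells out the step the paper calls immediate, and the sign bookkeeping (including the distinction between the shifts of sign $(-1)^n$ and $(-1)^{n+1}$) is accurate.
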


\begin{proof}
It is sufficient to prove this for elementary tensors $\et=\chain$. We have
\begin{eqnarray*}
s^n_1(\cyc \et) &=& (-1)^n s^n_1(a_{n+1} \otimes \subchain 1 n)  \\
&=& (-1)^{n+1} \rho(a_{n+1}) \otimes \subchain 1 n\\
&=& (-1)^{n+1} \cyc^2 (\subchain 1 n \otimes \rho(a_{n+1}))\\
&=&  \cyc^2 s^n_{n+1}(\et).
\end{eqnarray*}
Also
\begin{eqnarray*}
s^n_k(\cyc \et) &=& (-1)^n s^n_k(a_{n+1} \otimes \subchain 1 n)  \\
&=& (-1)^{n+k} a_{n+1}\otimes \subchain {1}{k-2}\otimes \rho(a_{k-1}) \otimes \subchain {k} n\\
&=& (-1)^{n+k}(-1)^{n+1} \cyc (\subchain {1}{k-2}\otimes \rho(a_{k-1}) \otimes \subchain {k} {n+1})\\
&=& (-1)^{k-1} \cyc (\subchain {1}{k-2}\otimes \rho(a_{k-1}) \otimes \subchain {k} {n+1})\\
&=& \cyc s^n_{k-1}(\et).
\end{eqnarray*}
The last assertion follows immediately from (i) and (ii).
\end{proof}

To cobound a cocycle, we will consider the adjoint of $\sdplusds$ for some well chosen $\rho : \A \to \A \oh \A$.
We regroup in the next lemma some of the key features of $\sdplusds$.

\begin{Lemma}\label{split_everywhere}
For any $\rho : \A \to \A \oh \A$ and $n\ge 1$, and with the notation as above, we have \begin{enumerate}
  \item $\sd k 0 = - \ds 0 {k+1}$ for $k=1, \ldots, n-1$,
  \item $\sd kj = - \ds j{k+1}$ for $1\le j < k \le n$,
  \item $\sd kj = - \ds {j+1}k$ for $1 \le k < j \le n$,
  \item and therefore $\sdplusds$ is given by
  $$\sd n0 + \ds {n+1}{n+1} +\ds 01 + \ds 0{n+1} + \sum_{i=1}^{n} \bigl( \sd ii + \ds ii + \ds i{i+1} + \ds {i+1}i\bigr).$$ \end{enumerate}
  \end{Lemma}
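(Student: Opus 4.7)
The plan is to verify (i)--(iii) by direct computation on an elementary tensor $\et=\chain\in\Cc_n(\A,\A)$, and then deduce (iv) by bookkeeping the two double sums
\[s^{n-1}d^{n-1} = \sum_{k=1}^{n}\sum_{j=0}^{n}\sd kj, \qquad d^ns^n = \sum_{i=0}^{n+1}\sum_{l=1}^{n+1}\ds il,\]
keeping track of which pairs cancel under (i)--(iii) and which remain.

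The underlying principle behind (i)--(iii) is a simplicial-type identity: when the face operator multiplies two factors at positions disjoint from the position where $s$ inserts $\rho$, the compositions $sd$ and $ds$ agree on an elementary tensor up to a shift in the face index (because $\rho$ replaces one factor by two) and a single sign. Writing $\rho(a_i)=\sum_j u^{(i)}_j\otimes v^{(i)}_j$ and expanding each side of (i), both reduce to a copy of $\subchain 2k\otimes\rho(a_{k+1})\otimes\subchain{k+2}{n}\otimes a_{n+1}a_1$, with overall signs $(-1)^k$ on the left and $(-1)^{k+1}$ on the right; the constraint $k\le n-1$ ensures the wraparound multiplication performed by $d^n_0$ never touches the inserted $\rho$. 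For (ii) with $1\le j<k\le n$ the multiplication site lies strictly to the left of the $\rho$ insertion, so inserting $\rho$ at the shifted position $k+1$ before performing $d^n_j$ gives the same tensor, with a net sign difference of $-1$. For (iii) with $1\le k<j\le n$ the multiplication site lies strictly to the right, so $\rho$ is inserted first at position $k$ and the multiplication then occurs at the shifted positions $j+1,j+2$, giving $\ds{j+1}k$ up to the same single sign.

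For (iv), I would enumerate the pairs in each double sum and match them off. Part (i) pairs $\sd k0$ with $\ds 0{k+1}$ for $1\le k\le n-1$; equivalently, it cancels every $\ds 0l$ with $2\le l\le n$. Part (ii) pairs $\sd kj$ with $\ds j{k+1}$ for $1\le j<k\le n$; equivalently, every $\ds il$ with $1\le i\le n-1$ and $i+2\le l\le n+1$. Part (iii) pairs $\sd kj$ with $\ds{j+1}k$ for $1\le k<j\le n$; equivalently, every $\ds il$ with $1\le l\le n-1$ and $l+2\le i\le n+1$. A direct check shows the surviving terms are precisely
\[\sd n0;\qquad \sd ii\ (1\le i\le n);\qquad \ds 01,\ \ds 0{n+1},\ \ds{n+1}{n+1};\qquad \ds ii\ (1\le i\le n);\qquad \ds i{i+1},\ \ds{i+1}i\ (1\le i\le n),\]
and regrouping these in the obvious way produces the formula in (iv).

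The only real obstacle is the combinatorial bookkeeping: because inserting $\rho$ shifts the positions of all subsequent factors by one, the index of the face operator on the other side of each identity must be chosen to compensate for this shift, and in (iv) one must be scrupulous to ensure that every cancelled pair is accounted for exactly once and that the surviving terms are exactly those listed. The identities themselves are routine once the indexing conventions are in place.
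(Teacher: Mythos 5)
Your proposal is correct and follows essentially the same route as the paper: parts (i)--(iii) are verified by direct computation on elementary tensors with the same index shifts and signs, and (iv) is obtained by bookkeeping the two double sums. Your explicit enumeration of which $\ds il$ terms cancel and which survive is just a slightly more detailed version of the paper's count (it removes $2(n-1)+2n(n-1)$ terms from the $2(n+1)^2$ total and checks the remaining $4n+4$ are those listed), so there is no substantive difference.
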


\begin{proof} The proofs are all straightforward. For (i), we see that for $k=1, \ldots, n-1$ and for an elementary tensor $\et=\achain$, we can write
\begin{eqnarray*}
\sd k 0 (\et) &=& s^{n-1}_k (\subchain 2 {n+1} a_1)\\
&=& (-1)^k \subchain 2 k \otimes \rho(a_{k+1})\otimes \subchain {k+2}{n+1}a_1
\end{eqnarray*}
and
\begin{eqnarray*}
\ds 0 {k+1}(\et) &=& (-1)^{k+1} d^{n}_{0}(\subchain 1 k \otimes \rho(a_{k+1}) \otimes \subchain {k+2}{n+1})\\
&=& (-1)^{k+1} \subchain 2 k \otimes \rho(a_{k+1})\otimes \subchain {k+2}{n+1}a_1.
\end{eqnarray*}

For the second statement, for $1\le j < k \le n$, we have
\begin{eqnarray*}
\sd kj (\et) &=& (-1)^j s^{n-1}_k (\subchain 1 j \subchain {j+1} {n+1})\\
            &=& (-1)^{j+k} \subchain 1 j \subchain {j+1}{k}\otimes \rho(a_{k+1})\otimes \subchain {k+2} {n+1}
\end{eqnarray*}
and
\begin{eqnarray*}
\ds j{k+1} (\et) &=& (-1)^{k+1} d^{n}_j (\subchain 1 k \otimes \rho(a_{k+1})\otimes \subchain {k+2} {n+1})\\
            &=& (-1)^{j+k+1}\subchain 1 j \subchain {j+1}{k}\otimes \rho(a_{k+1})\otimes \subchain {k+2} {n+1}.
\end{eqnarray*}
The proof of the third statement is also direct. For the last statement, we have
$$\sdplusds=\sum_{i=0}^n \sum_{k=1}^n \sd ki + \sum_{i=0}^{n+1} \sum_{k=1}^{n+1} \ds ik,$$ in which there are $2(n+1)^2$ terms. Of these terms, exactly $2(n-1)$ cancel by (i), and exactly $2n(n-1)$ cancel by (ii) and (iii), leaving us with $4n+4$ terms.
It is easily checked that these terms are precisely those given in the lemma.
\end{proof}

We can rewrite the terms in Lemma~\ref{split_everywhere} as follows. For $\sum_{i=1}^{n} \bigl( \sd ii + \ds ii + \ds i{i+1} + \ds {i+1}i\bigr)(\et)$, where $\et=\achain$, we have
\begin{eqnarray}
\sd ii(\et)&=& \subchain 1 {i-1} \otimes \rho(a_i a_{i+1})\otimes \subchain {i+2} {n+1},\label{eq:sdds1}\\
\ds ii (\et) &=& \subchain1{i-1} \otimes \pi \rho (a_i) \otimes \subchain {i+1}{n+1}, \label{eq:sdds2}\\
\ds i{i+1} (\et)  &=& - \subchain 1 {i-1} \otimes a_i \rho(a_{i+1})\otimes \subchain {i+2}{n+1},\label{eq:sdds3}\\
\ds {i+1}i (\et) &=& - \subchain 1 {i-1} \otimes \rho(a_i)a_{i+1}\otimes \subchain {i+2}{n+1}.\label{eq:sdds4}
\end{eqnarray}

The four other terms in Lemma~\ref{split_everywhere} give a similar configuration, where a cyclic shift will be useful to help compare two of the terms with the other ones. This is the content of the following lemma.

\begin{Lemma}\label{splitting_the_first}Let $\et=\chain\in\Cc_n(\A,\A)$, $n\ge 1$. Then the following hold.
\begin{eqnarray*}
\sd n0 (\et)&=& (-1)^n \subchain 2 n \otimes \rho(a_{n+1} a_1),\\
\ds {n+1}{n+1}(\et) &=&(-1)^{n} \cyc( \subchain 2{n} \otimes \pi\rho(a_{n+1})\otimes a_1), \\
\ds 01 (\et)  &=& (-1)^{n+1}\cyc( \subchain 2 {n} \otimes a_{n+1} \rho(a_1)),\\
\ds 0{n+1} (\et) &=& (-1)^{n+1} \subchain 2 n \otimes \rho(a_{n+1}) a_1.
\end{eqnarray*}
\end{Lemma}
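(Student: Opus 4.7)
The plan is direct verification from the definitions of $d^n_i$ and $s^n_k$, applied to an elementary tensor $\et=\achain$. Identities (i) and (iv) are straightforward two-step computations; identities (ii) and (iii) require a cyclic shift to re-express a tensor whose ``interesting'' term has landed at an end of the chain.

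For (i) I would first apply $d^{n-1}_0$ (no sign) to obtain $\subchain 2 n \otimes a_{n+1}a_1$ in $\Cc_{n-1}(\A,\A)$, then apply $s^{n-1}_n$, which by Definition~\ref{def-s} contributes a sign $(-1)^n$ and wraps $\rho$ around the last slot, giving exactly $(-1)^n \subchain 2 n \otimes \rho(a_{n+1}a_1)$. For (iv) I would start from $s^n_{n+1}(\et)=(-1)^{n+1}\subchain 1 n \otimes \rho(a_{n+1})$ and apply $d^n_0$ (sign $+1$), which lifts $a_1$ off the front and multiplies it on the right of the rightmost component of $\rho(a_{n+1})$, yielding $(-1)^{n+1}\subchain 2 n \otimes \rho(a_{n+1})a_1$.

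For (ii) the composite $d^n_{n+1} s^n_{n+1}$ carries total sign $(-1)^{n+1}(-1)^{n+1}=1$ and produces $a_1 \otimes \subchain 2 n \otimes \pi\rho(a_{n+1})$; I would then rewrite this using the identity $\cyc(\subchain 2 n \otimes \pi\rho(a_{n+1})\otimes a_1) = (-1)^n\, a_1 \otimes \subchain 2 n \otimes \pi\rho(a_{n+1})$, which is just the definition of $\cyc$ on $\Cc_n$. For (iii) I would expand $d^n_0 s^n_1(\et)$ with $\rho(a_1)=\sum_j u_j \otimes v_j$ to obtain $-\sum_j v_j \otimes \subchain 2 n \otimes a_{n+1}u_j$, and the same cyclic-shift identity recasts this as $(-1)^{n+1}\cyc(\subchain 2 n \otimes a_{n+1}\rho(a_1))$.

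The only real obstacle is careful sign- and position-bookkeeping; the single \emph{interpretive} step is noticing in (ii) and (iii) that the bilinear data $\pi\rho(a_{n+1})$ (resp.\ $a_{n+1}\rho(a_1)$) naturally appears at an end of the chain and must be cyclically repositioned to match the formulation of the lemma, which is evidently chosen to line up with equations~(\ref{eq:sdds1})--(\ref{eq:sdds4}) for the subsequent combined analysis of $\sdplusds$.
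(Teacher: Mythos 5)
Your computation is correct, and it is exactly the route the paper takes: its proof of this lemma is simply ``clear by direct calculations,'' and your sign and position bookkeeping for $s^{n-1}_n d^{n-1}_0$, $d^n_{n+1}s^n_{n+1}$, $d^n_0 s^n_1$ and $d^n_0 s^n_{n+1}$, including the cyclic repositioning in the second and third identities via the definition of $\cyc$, checks out against the definitions of $d^n_i$ and $s^n_k$.
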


\begin{proof}
This is clear by direct calculations. \end{proof}

\subsection{A general method} \label{section method}

In cyclic cohomology, when considering the adjoint of $\sdplusds$, it will be sufficient to analyze
$$\rho(ab) + \pi \rho(a)\otimes b - a \rho(b) - \rho(a)b,$$ for appropriate values of $a, b \in \A$.
This follows from lemmas \ref{split_everywhere} and \ref{splitting_the_first} together with the following observations.

For $1\le i \le n$, it follows from
(\ref{eq:sdds1}), (\ref{eq:sdds2}), (\ref{eq:sdds3}) and (\ref{eq:sdds4}) that the value of
$(\sd ii + \ds ii + \ds i{i+1} + \ds {i+1}i)(\et)$, where $\et$ is an elementary tensor,
is easily determined once we have obtained the value on the part of the cochains which may differ in these four terms, namely
\begin{align*}
&{\rm for\ }\sd ii(\et):& +\ \rho(a_i a_{i+1}), &\\
&{\rm for\ } \ds ii(\et):& +\ \pi \rho(a_i) \otimes a_{i+1}, &\\
&{\rm for\ } \ds i{i+1}(\et):& -\ a_i \rho(a_{i+1}),&\\
&{\rm for\ } \ds {i+1}i(\et):& -\ \rho(a_i)a_{i+1}.&
\end{align*}
For instance, if $\rho(a_i a_{i+1})=\ a_i \rho(a_{i+1})$, then $(\sd ii + \ds i{i+1})(\et)=0$.

For $\sd n0 + \ds {n+1}{n+1} +\ds 01 + \ds 0{n+1}$, the analysis is similar in cyclic cohomology by Lemma \ref{splitting_the_first}.
For instance, when any terms cancel in $\rho(ab) + \pi \rho(ab) - a \rho(b) - \rho(a)b$,
then the corresponding terms in  $\sd n0 + \ds {n+1}{n+1} +\ds 01 + \ds 0{n+1}$ cancel {\em up to cyclic equivalence}.

\section{The Cuntz semigroup algebra} \label{s:Cuntz and reduced}

Let $\Cuntz$ ($1\le m \le \infty$) be the abstract semigroup generated by a unit \one, a zero \zero\ and $p_i$, $q_i$, $1 \le i \le m$ such that $$q_i p_j = \delta_{ij} \one.$$
Apart from \one\ and \zero, the elements of $\Cuntz$ can be written uniquely in the form $\palpha \qbeta$ where $\alpha, \beta$ are finite strings of integers $i$ ($1\le i \le m$), one of which could be empty.
In our work, it will be useful to extend this when we have two empty strings, so that we can write $\one = p_{\alpha} q_\alpha$ where $\alpha$ is the empty string.

The product is given by
$$(p_\alpha q_{\beta \gamma})(p_\beta q_{\beta'}) = p_\alpha q_{\beta'\gamma}$$
and
$$(p_\alpha q_{\beta})(p_{\beta\gamma} q_{\beta'}) = p_{\alpha \gamma}q_{\beta'},$$
where $\alpha\beta$ denotes the string formed by the integers in $\alpha$ followed by those in $\beta$.
Note that $q_\beta p_\beta=\one$, $p_\alpha p_\beta = p_{\alpha \beta}$ and
$q_\alpha q_\beta = q_{\beta \alpha}$.

One can think of this semigroup as a path semigroup in the following way. Take the directed graph consisting of $m$ directed loops at a single vertex. Then $\alpha$ is a directed path, simply the path formed by walking along the loops in the order they appear in $\alpha$. Now $p_\alpha$ means to go forward along this path, and $q_\alpha$ to go backwards along this same path (starting from the end and backtracking). The semigroup contains paths which consist of going forward along a path, and then going backward along a path (the same or different). The rules for the product are that if you go backward on a loop and then forward on a loop, the product is zero unless it is the same loop, in which case the product is $\one$.

\begin{remark} The definition and most of the notation is that of Renault as presented in \cite{[Pa]}. To avoid conflicting notation with our $s_k^n$ and $\cyc$, the elements $s_i$ are written here as $p_i$, and the $t_j$ as $q_j$.
\end{remark}
%
%
%

Let $\ell^1(\Cuntz)$ be the semigroup algebra of $\Cuntz$, with convolution product. We denote the point mass at $\palpha \qbeta$ by $\palpha \qbeta$ itself, and similarly $\zero$ denotes point mass at $\zero$, and $\one$ point mass at $\one$, so that product of these elements in $\ell^1(\Cuntz)$ is directly given by the operation in the semigroup.

\begin{Definition} Let $\Cuntz$ be a Cuntz semigroup, and let $\C \zero$ be the ideal generated by $\zero$ in $\ell^1(\Cuntz)$. The \dt{reduced Cuntz algebra} $\AC$ is the Banach algebra given by
$$\AC=\ell^1(\Cuntz)/ \C \zero.$$
\end{Definition}
It follows directly from \cite[Theorem 4.2]{[Ly98]} that $\AC$ and $\ell^1(\Cuntz)$ have the same simplicial and cyclic cohomology, and in the sequel we shall work with $\AC$.

\section{Construction of a contracting homotopy in cyclic cohomology}

\subsection{The first reduction}
For the reduced Cuntz algebra $\AC$, we consider the following map.
\begin{Definition} Let $\rho: \AC \to \AC \oh \AC$ be given by
$$\rho(\palpha\qbeta)=\palpha\otimes \qbeta.$$
\end{Definition}

Let $s^n_k$ and $s^n$  be defined from this map $\rho$ by Definition \ref{def-s}.
As we will show, the map $\sdplusds$ will enable us to move the problem of cobounding a cochain on arbitrary chains to one where the chains have some added specific features. Let us define the appropriate notions.

\begin{Definition} We define:

\begin{enumerate}
\item A chain $\et\in\Cc_n(\AC,\AC)$ is called an \dt{elementary tensor} if it is of the form $\et=\chain$ where each $a_i$ is a point mass (i.e. $a_i=\p i \q i$).
\item We say that \dt{an elementary tensor $\et$ has a transition at $a_i$}, for $1\le i \le n$, if $a_i=\p i \q i$ and $a_{i+1} = \p {i+1} \q {i+1}$ with $\beta_i$ and $\alpha_{i+1}$ non empty.
Such a transition is called \dt{orthogonal} if $a_i a_{i+1} = 0$.
We say that $\et$ has a \dt{(orthogonal) transition at} $a_{n+1}$ if the same holds with $n+1$, $1$ in lieu of $i$, $i+1$.
\end{enumerate}
\end{Definition}

\begin{Theorem}\label{k_transitions} Let $\et=\chain\in\Cc_n(\AC,\AC)$, $n\ge 1$, be an elementary tensor with $k$ transitions, $0\le k \le {n+1}$, and suppose that $l$ of these transitions are orthogonal $(0\le l \le k)$. Then $(\sdplusds)(\et)$ is cyclically equivalent to $$k\et + \sum_{j=1}^{k-l} {\bf y}_j$$
where each elementary tensor ${\bf y}_i\in\Cc_n(\AC,\AC)$ has exactly $k-1$ transitions. \end{Theorem}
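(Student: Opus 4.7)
The plan is to reduce the computation of $(\sdplusds)(\et)$ to a case analysis of a single four-term expression at each adjacent pair of tensor factors, using the decomposition established in Lemmas \ref{split_everywhere} and \ref{splitting_the_first} together with the observations at the end of Section \ref{section method}. Concretely, the $4n$ terms indexed by $i=1,\ldots,n$ combine, via (\ref{eq:sdds1})--(\ref{eq:sdds4}), to insert
\[
E(a,b) := \rho(ab) + \pi\rho(a)\otimes b - a\rho(b) - \rho(a)b
\]
with $(a,b)=(a_i,a_{i+1})$ in place of $a_i\otimes a_{i+1}$ in the surrounding tensor, while the four remaining terms contribute $E(a_{n+1},a_1)$ at the cyclic position, up to cyclic equivalence. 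The task is thereby reduced to computing $E(a,b)$ for point masses $a=p_\alpha q_\beta$, $b=p_\gamma q_\delta$.

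Since $\rho(p_\alpha q_\beta)=p_\alpha\otimes q_\beta$ and $\pi\rho(a)=a$, the computation is driven by the semigroup product $q_\beta p_\gamma$. If the pair $(a,b)$ is not a transition (i.e.\ $\beta$ or $\gamma$ is empty), a direct check produces the pairwise cancellations $\rho(ab)=a\rho(b)$ and $\pi\rho(a)\otimes b = \rho(a)b$, giving $E(a,b)=0$. If the transition is orthogonal then $q_\beta p_\gamma=0$ in the semigroup, three of the four terms of $E(a,b)$ vanish in $\AC$, and one is left with $E(a,b)=a\otimes b$. If the transition is non-orthogonal then one of $\beta,\gamma$ is a prefix of the other; writing $\gamma=\beta\gamma'$ or $\beta=\gamma\beta'$ accordingly, one finds $\rho(ab)=a\rho(b)$ in the first subcase and $\rho(ab)=\rho(a)b$ in the second, leaving $E(a,b)=a\otimes b-{\bf z}$, where ${\bf z}$ equals $p_\alpha\otimes p_{\gamma'}q_\delta$ or $p_\alpha q_{\beta'}\otimes q_\delta$ respectively.

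Reinserted into the surrounding tensor, each non-transition position contributes $0$, each orthogonal transition contributes a copy of $\et$, and each non-orthogonal transition contributes a copy of $\et$ together with one signed elementary tensor ${\bf y}_j$ obtained by substituting the corresponding ${\bf z}$ for $a_i\otimes a_{i+1}$. Summing over the $k$ transitions yields $k\et+\sum_{j=1}^{k-l}{\bf y}_j$, the identification being up to cyclic equivalence whenever the wrap-around position $n+1$ participates. Each ${\bf y}_j$ has exactly $k-1$ transitions because the local substitution empties either the $\beta$-part of the new $i$-th factor or the $\alpha$-part of the new $(i+1)$-th factor, so that the transition at $i$ is destroyed, while the criteria determining transitions at positions $i-1$ and $i+1$ involve only unchanged data (the untouched neighbours and the unaltered $\alpha$-part of $a_i$, $\beta$-part of $a_{i+1}$).

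The main obstacle will be the careful handling of the cyclic wrap-around: the ${\bf y}_j$ produced by a non-orthogonal transition at $a_{n+1}$ is delivered by Lemma \ref{splitting_the_first} already composed with the cyclic shift $\cyc$, so one must invoke Definition \ref{cyclically equivalent} to replace it by an untwisted elementary tensor of $\Cc_n(\AC,\AC)$ before reading off its transition count. Beyond that and the enumeration of subcases above, the proof is a direct verification.
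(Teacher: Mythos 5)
Your proposal is correct and follows essentially the same route as the paper's proof: the reduction via Lemmas \ref{split_everywhere} and \ref{splitting_the_first} to the four-term expression $\rho(ab)+\pi\rho(a)\otimes b-a\rho(b)-\rho(a)b$ at each adjacent pair, the same case analysis (no transition gives $0$, an orthogonal transition gives $a\otimes b$, a non-orthogonal transition gives $a\otimes b$ minus a term with the transition destroyed), and the same appeal to cyclic equivalence at the wrap-around position, with your transition-count argument for the ${\bf y}_j$ only making explicit what the paper leaves implicit. One small correction: in the no-transition case with $\gamma$ empty but $\beta$ non-empty, the cancellations pair as $\rho(ab)=\rho(a)b$ and $\pi\rho(a)\otimes b=a\rho(b)$ rather than as you state, though the conclusion $E(a,b)=0$ is unaffected.
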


\begin{proof} The proof uses the general method outlined in Subsection~\ref{section method}.
We consider $$\rho(ab) + \pi \rho(a)\otimes b - a \rho(b) - \rho(a)b,$$ for $a, b \in \AC$ , and recall that the statement of the theorem requires an identification up to cyclic equivalence.

First note that $\pi \rho(a) = a$ for all $a \in \AC$ , so that the term corresponding to $\pi \rho(a)\otimes b$ is always cyclically equivalent to $\et$. (Note that here, in fact, we have $\ds ii(\et) = \et$ for $1\le i \le n+1$ and do not need cyclic equivalence. However one advantage of this general approach in cyclic cohomology is that we do not need to keep track of this - and we shall not do so in the remainder of this proof as it provides no relevant information.)

We are left to consider $\rho(ab) - a \rho(b) - \rho(a)b$. Writing $a=\pa 1 \qb 1$ and $b=\pa 2 \qb 2$, we have
$$\rho(ab) - a \rho(b) - \rho(a)b = \rho(ab)- \pa 1 \qb 1 \pa 2 \otimes \qb 2  - \pa 1 \otimes \qb 1 \pa 2 \qb 2.$$

\noindent Let us first consider the cases where there is a transition at $a$.
\begin{itemize}
\item If $ab= 0$ (the transition is orthogonal), then $\qb 1 \pa 2 = 0$ and therefore $\rho(ab) - a \rho(b) - \rho(a)b=0$ as each of these terms vanish. So terms corresponding to these add to 0 up to cyclic equivalence.

\item If $a b \ne 0$ (the transition is not orthogonal), then there are two cases to consider.

If $\qb 1 \pa 2 = p_{\alpha'}$ for some $\alpha'$ (i.e. if $\alpha_2 = \beta_1 \alpha'$), then $\rho(ab) - a \rho(b)=0$ and $- \rho(a)b = - \pa 1 \otimes p_{\alpha'} \qb 2$. This last term has no transition, and so we have one term with one fewer transition.

A similar result is true if $\qb 1 \pa{2} = q_{\beta'}$ for some $\beta'$, as we obtain $\pa 1 q_{\beta'} \otimes \qb 2$, which has no transition.

    \end{itemize}

\noindent Let us now consider the case where there is no transition at $a$. This means that $\beta_1$ or $\alpha_2$ is empty. In the first case, $\rho(ab) - a \rho(b) = 0$ and $-\rho(a)b=-a\otimes b$ which cancels out with $\pi \rho(a)\otimes b$. Similarly, when $\alpha_2$ is empty we get $\rho(ab) - \rho(a)b = 0$ and $- a \rho(b) = - a \otimes b$. Thus, in these cases, the total contribution of the terms corresponding to $\rho(ab) + \pi \rho(a)\otimes b - a \rho(b) - \rho(a)b$ is cyclically equivalent to 0.
\end{proof}

For $n\ge 1$, let $P= \sdplusds$ and let $\Phi: \Cc_n(\AC,\AC) \to \Cc_n(\AC,\AC) $ be the bounded linear map given, for an elementary tensor $\et\in \Cc_n(\AC,\AC) $, by
$$\Phi(\et) = \left(\prod_{j=1}^{n+1} (I - \frac{1}{j}P)\right)(\et) = \frac{1}{(n+1)!} \left(\prod_{j=1}^{n+1} (jI - P)\right)(\et).$$

 \begin{Theorem}\label{no_transition_left} For any chain $x\in \Cc_n(\AC,\AC) $, $n\ge 1$, $\Phi(x)$ is cyclically equivalent to a sum of cochains with no transition.
 \end{Theorem}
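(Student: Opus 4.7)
The plan is to reduce to elementary tensors and apply Theorem~\ref{k_transitions} iteratively. By linearity, it suffices to prove the claim for an elementary tensor $\et$ with $k$ transitions, where $0\le k\le n+1$; the general case then follows by boundedness of $\Phi$ and density of point-mass elementary tensors in $\Cc_n(\AC,\AC)$. I will use the following reformulation of Theorem~\ref{k_transitions}: for any elementary tensor $\mathbf{w}$ with exactly $m$ transitions, $(mI-P)(\mathbf{w})$ is cyclically equivalent to a sum of elementary tensors each having exactly $m-1$ transitions (so when $m=0$ the sum is empty and $P(\mathbf{w})$ is cyclically equivalent to $0$).

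Since the operators $(jI-P)$ are polynomials in $P$ they pairwise commute, so the factors in $\Phi=\frac{1}{(n+1)!}\prod_{j=1}^{n+1}(jI-P)$ may be applied in any order. I would apply them to $\et$ in the order $j=k,k-1,\ldots,1$ first, then $j=k+1,\ldots,n+1$. The first $k$ steps form a telescope: each application of $((k-i)I-P)$ sends a sum of elementary tensors with $k-i$ transitions to one with $k-i-1$ transitions, up to cyclic equivalence, so after $k$ steps we land in a sum of elementary tensors with $0$ transitions. The remaining factors $(jI-P)$ with $j>k$, applied to a sum of tensors with no transitions, act cyclically as multiplication by $j$ (because $P$ is cyclically equivalent to $0$ on such tensors), thereby preserving the no-transitions property. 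Assembling the steps gives $\Phi(\et)$ cyclically equivalent to a sum of elementary tensors with no transitions.

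The main subtlety, and the step that warrants explicit verification, is that the telescope presupposes that $P$ preserves cyclic equivalence, i.e.\ $P((I-\cyc)\Cc_n(\AC,\AC))\subseteq (I-\cyc)\Cc_n(\AC,\AC)$, so that Theorem~\ref{k_transitions} can legitimately be invoked after each intermediate application. For the map $s^n$, parts (i) and (ii) of Lemma~\ref{s_is_syclic} combine to yield the identity $s^n(\cyc x)-\cyc s^n(x)=-(I-\cyc)\cyc s^n_{n+1}(x)$, which lies in $(I-\cyc)\Cc_{n+1}(\AC,\AC)$; for the Hochschild differential $d^n$ the analogous containment is the standard fact that $d^n$ descends to the cyclic chain complex. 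Once these compatibilities are recorded the iterative use of Theorem~\ref{k_transitions} is routine, and the bound $k\le n+1$ guarantees that the factors $(1\cdot I-P),(2I-P),\ldots,(kI-P)$ all appear in the product defining $\Phi$.
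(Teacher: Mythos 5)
Your argument is correct and takes essentially the same route as the paper: commute the factors of $\Phi$, apply Theorem~\ref{k_transitions} iteratively to strip off one transition at a time, and observe that the remaining factors $(jI-P)$, $j>k$, cannot reintroduce transitions. Your explicit check that $P$ preserves $(I-\cyc)\Cc_n(\AC,\AC)$, and your use of the $k=0$ case of Theorem~\ref{k_transitions} to handle the leftover factors, merely make precise points the paper's proof leaves implicit.
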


 \begin{proof} For any elementary tensor $\et\in \Cc_n(\AC,\AC) $ with $k$ transitions, it follows from a repeated application of Theorem \ref{k_transitions} that
 $$(I - \frac{P}{1}) (I - \frac{P}{2}) \cdots (I- \frac{P}{k})(\et)$$
 is a sum of cochains with no transitions. As the $I - \frac{1}{j}P$ are pairwise commuting, we have
 $$\Phi(\et) = \left(\prod_{j=k+1}^{n+1} (I - P/j)\right) (I - \frac{P}{1}) (I - \frac{P}{2}) \cdots (I- \frac{P}{k})(\et).$$
 The result now follows immediately as $P$ does not increase the number of transitions of an elementary tensor, and therefore neither does $I-P/j$ for any $j\ge k+1$.
  \end{proof}

\subsection{The second reduction}\label{section_second_reduction}
In this subsection, we look at elementary tensors $\et\in \Cc_n(\AC,\AC) $ with no transitions.
First, we introduce basic definitions and some notation.
\begin{Definition} For a string of integers $\alpha=i_1 i_2 ... i_n$,
\begin{enumerate}
\item the \dt{length of $\alpha$}, denoted by $\order(\alpha)$, is the number of integers in the string,
\item for $1 \le j\le k \le \order(\alpha)$, we denote by $\alpha_{[j,k]}$ the string $i_j i_{j+1} ... i_k$.
\end{enumerate}
\end{Definition}

\begin{Definition}
For $a = \palpha \qbeta \in \AC$ , the {\em length of $a$}, denoted by $\order(a)$, is defined by $\order(a)=\order(\alpha)+\order(\beta)$.
The {\em length of an elementary tensor $\et=\chain$} is $\order(\et)=\sum_{i=1}^{n+1} \order(a_i)$.
\end{Definition}

\begin{Definition}\label{def-zmap} Let $\bar{\rho}: \AC \to \AC \oh \AC$  be the bounded linear map given, for $a=\palpha\qbeta$ with $\order(\alpha)=N$ and $\order(\beta)=M$, by
\begin{eqnarray*}
\bar{\rho}(a)= \frac{1}{\order(a)}\rho(a),
\end{eqnarray*}
where $\rho: \AC \to \AC \otimes \AC$  is the (unbounded) linear map defined by
\begin{eqnarray*}
\rho(\palpha\qbeta)= \sum_{k=0}^{N-1} p_{\alpha_{[1,k]}} \otimes p_{\alpha_{[k+1,n]}}\qbeta + \sum_{l=1}^{M} \palpha q_{\beta_{[l+1,m]}} \otimes q_{\beta_{[1,l]}},
\end{eqnarray*}
where the term corresponding to $k=0$ is $\one \otimes \palpha \qbeta$, and the term for $l=M$ is $\palpha\otimes \qbeta$. (Note that $\rho(\one)=0$ as both sums are empty in this case.)

We define $\bar{s}_k^n$ from this map $\bar{\rho}$ as given in Definition~\ref{def-s}.
Finally, we define the bounded linear map $\stwo^n : \Cc^{n}(\AC,\AC) \to\Cc^{n+1}(\AC,\AC) $, $n\ge 0$. On an elementary tensor $\et=\chain$ such that $\order(\et)\ne 0$, we set
\[ \stwo^n(\et) = \frac{1}{\order(\et)}\sum_{k=1}^{n+1} \order(a_k) \bar{s}_k^n(\et),\]
and we set $\stwo^n(\et)=0$ if $\order(\et)=0$.
\end{Definition}

As the set up is slightly different from that of subsection \ref{section method}, we need to explain why we can still use that method. We can rewrite $\stwo^n$ on an elementary tensor $\et$ as
$$\stwo^n(\et)=\frac{1}{\order(\et)} \sum_{k=1}^{n+1}s^n_i(\et)$$
where
$$s^n_i(\et) = (-1)^{i} \subchain 1{i-1}\otimes \rho(a_i) \otimes \subchain {i+1}{n+1}.$$
(Note that we can think of $s^n_i: C_n(\AC,\AC)  \to C_{n+1}(\AC,\AC) $, where $C_n(\AC,\AC) $ is the unbounded (or purely algebraic) space of chains, or as a notational device.)
   Rewriting $\stwo$ in this way makes it clear that the conclusion of Lemma \ref{s_is_syclic} and Lemma \ref{split_everywhere} hold on elementary tensors $\et$,
and therefore Lemma \ref{splitting_the_first} holds in this case, again on elementary tensors $\et$.
Thus we can use the general method described in subsection \ref{section method}.

\begin{remark} Note that the map $\stwo^n$ is very close to the map used in [GJW] for $\ell_1(Z_+)$.
\end{remark}

To proceed, we need a lemma which states an essential property of $\rho$.

\begin{Lemma}\label{split is a derivation} Let $a = \pa 1 \q1$ and $b= \p2\qb 2$.
If there is no transition at $a$, then
$\rho(ab) = a \rho(b) + \rho(a)b,$ for $a, b \in \AC$ .
\end{Lemma}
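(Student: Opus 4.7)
The proof is a direct verification that reduces, by the hypothesis, to bookkeeping in a single index shift. Recall that
\[
\rho(\palpha q_\beta)
 = \sum_{k=0}^{N-1} p_{\alpha_{[1,k]}} \otimes p_{\alpha_{[k+1,N]}} q_\beta
 + \sum_{l=1}^{M} \palpha\, q_{\beta_{[l+1,M]}} \otimes q_{\beta_{[1,l]}},
\]
so $\rho(p_\alpha q_\beta)$ enumerates the possible ``cuts'' of the reduced word $p_\alpha q_\beta$, either inside $\alpha$ (the first sum) or inside $\beta$ (the second sum). The hypothesis that there is no transition at $a$ unpacks to the dichotomy: either $\beta_1$ is empty, or $\alpha_2$ is empty. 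I would dispatch the two cases in parallel.

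\textbf{Case 1:} $\beta_1$ empty, so $a = p_{\alpha_1}$ and $ab = p_{\alpha_1\alpha_2} q_{\beta_2}$, with $\order(\alpha_1\alpha_2)=N_1+N_2$. Here the second sum of $\rho(a)$ is empty. The plan is to expand $\rho(ab)$ and split the first sum at $k=N_1$. For $k<N_1$ one has $(\alpha_1\alpha_2)_{[1,k]} = (\alpha_1)_{[1,k]}$ and $(\alpha_1\alpha_2)_{[k+1,N_1+N_2]} = (\alpha_1)_{[k+1,N_1]}\alpha_2$; these terms reassemble to $\rho(a)b$. For $k\ge N_1$, the substitution $j = k-N_1$ identifies these terms with the first sum of $p_{\alpha_1}\rho(b) = a\rho(b)$. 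The second (q-splitting) sum of $\rho(ab)$ factors as $p_{\alpha_1}$ times the second sum of $\rho(b)$, supplying the remaining part of $a\rho(b)$.

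\textbf{Case 2:} $\alpha_2$ empty, so $b = q_{\beta_2}$ and, using $q_{\beta_1}q_{\beta_2} = q_{\beta_2\beta_1}$, $ab = p_{\alpha_1} q_{\beta_2\beta_1}$ with $\order(\beta_2\beta_1) = M_1+M_2$. This is the mirror image of Case~1 with the roles of $p$ and $q$ interchanged. Here the first sum of $\rho(ab)$ matches the first sum of $\rho(a)b$ once $q_{\beta_1}q_{\beta_2}$ is rewritten as $q_{\beta_2\beta_1}$. The second sum of $\rho(ab)$ is split at $l = M_2$: for $l\le M_2$ the terms read $p_{\alpha_1} q_{(\beta_2)_{[l+1,M_2]}\beta_1} \otimes q_{(\beta_2)_{[1,l]}}$, which recover $a\rho(b)$; for $l>M_2$, the substitution $j = l-M_2$ recovers the second sum of $\rho(a)b$.

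The only thing to be careful about is the order-reversing convention $q_\alpha q_\beta = q_{\beta\alpha}$, which is what makes Case~2 look mirrored rather than identical to Case~1 at the level of indices. Beyond that, the argument is routine index chasing, with no genuine obstacle: the reason it works is precisely that under the ``no transition'' hypothesis the reduced form of $ab$ is the plain concatenation of the strings appearing in $a$ and in $b$, so splits of the word for $ab$ are in canonical bijection with splits of $a$ (completed by $b$ on the right) together with splits of $b$ (completed by $a$ on the left).
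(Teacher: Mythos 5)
Your argument is correct and follows the same route as the paper: the no-transition hypothesis reduces matters to the two cases $q_{\beta_1}=\one$ or $p_{\alpha_2}=\one$, and in each case the identity is a direct check from the definition of $\rho$ (the paper simply declares this ``clear from the definition,'' whereas you carry out the index bookkeeping, correctly handling the reversal $q_{\beta_1}q_{\beta_2}=q_{\beta_2\beta_1}$). The degenerate cases $a=\one$ or $b=\one$, which the paper notes separately, are already covered by your computation with an empty string.
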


\begin{proof} If there is no transition,
then when we write $a = \pa 1 \q1$ and $b= \p2\qb 2$, either $\q1$ or $\p2$ is $1$. Therefore the statement is equivalent to proving that
$$\rho(\p1 \p2 \qb 2) = \rho(\p1)\p2\qb 2 + \p1\rho(\p2 \qb 2)$$
and
$$\rho(\pa 1 \q1 \q2) = \rho(\pa 1 \q1) \q2 + \pa 1 \q1\rho(\q2).$$
This is clear from the definition. Note that the result is valid if $a$ or $b$ is $\one$. \end{proof}

\begin{Theorem}\label{no_transitions} Let $\et=\chain\in\Cc_n(\AC,\AC) $, $n\ge 1$, be an elementary tensor without transitions.
If $\order(\et)\ge 1$, then $(\sdplusdstwo)(\et)$ is cyclically equivalent to $\et$. \end{Theorem}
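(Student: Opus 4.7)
The plan is to apply the general method of Subsection~\ref{section method}, which, as already observed just after Definition~\ref{def-zmap}, remains valid on elementary tensors for $\stwo^n$ written as $\stwo^n(\et)=\frac{1}{\order(\et)}\sum_{k=1}^{n+1} s^n_k(\et)$ in terms of the (unbounded) $\rho$ of Definition~\ref{def-zmap}. By Lemmas~\ref{split_everywhere}(iv) and~\ref{splitting_the_first}, $(\sdplusdstwo)(\et)$ then splits (up to the overall factor $\frac{1}{\order(\et)}$) into $n+1$ groups of four terms indexed by the consecutive pairs $(a_i,a_{i+1})$ with indices mod $n+1$; each group contributes, in the appropriate slot and modulo cyclic equivalence for the boundary group, the quantity
\[
\rho(a_i a_{i+1}) + \pi\rho(a_i)\otimes a_{i+1} - a_i\rho(a_{i+1}) - \rho(a_i)a_{i+1}.
\]

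The first step is to compute $\pi\rho$. For $a=\palpha\qbeta$ with $\order(\alpha)=N$ and $\order(\beta)=M$, each summand $p_{\alpha_{[1,k]}}\otimes p_{\alpha_{[k+1,N]}}\qbeta$ multiplies back to $\palpha\qbeta$, and similarly (using $q_\alpha q_\beta=q_{\beta\alpha}$) each $\palpha q_{\beta_{[l+1,M]}}\otimes q_{\beta_{[1,l]}}$ multiplies back to $\palpha\qbeta$, giving
\[
\pi\rho(a)=(N+M)\,a=\order(a)\,a.
\]

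Next, since $\et$ has no transitions, Lemma~\ref{split is a derivation} applies to every consecutive pair (including the boundary pair $(a_{n+1},a_1)$), so $\rho(a_i a_{i+1})-a_i\rho(a_{i+1})-\rho(a_i)a_{i+1}=0$ and only $\pi\rho(a_i)\otimes a_{i+1}=\order(a_i)(a_i\otimes a_{i+1})$ survives. For an internal pair ($1\le i\le n$) this inserts to give exactly $\order(a_i)\,\et$. For the boundary pair, the three terms $\sd n 0$, $\ds 0 1$, $\ds 0{n+1}$ sum to zero modulo $(I-\cyc)\Cc_n(\AC,\AC)$ (the derivation identity cancels the $\rho(a_{n+1})a_1$ pieces directly, and the $a_{n+1}\rho(a_1)$ pieces after a cyclic shift), leaving only $\ds{n+1}{n+1}(\et)=(-1)^n\cyc(\subchain 2 n\otimes \pi\rho(a_{n+1})\otimes a_1)$, which by the identity $\cyc(\subchain 2{n+1}\otimes a_1)=(-1)^n\et$ equals $\order(a_{n+1})\,\et$. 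Summing all $n+1$ contributions and dividing by $\order(\et)=\sum_i\order(a_i)$ yields
\[
(\sdplusdstwo)(\et)\;\equiv\; \sum_{i=1}^{n+1}\frac{\order(a_i)}{\order(\et)}\,\et \;=\;\et \pmod{(I-\cyc)\Cc_n(\AC,\AC)},
\]
as required.

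The main points requiring care are the sign bookkeeping at the boundary (checking that the two factors of $(-1)^n$ cancel), and confirming that $\stwo^{n-1}d^{n-1}$ and $d^n\stwo^n$ carry the same overall scalar $\frac{1}{\order(\et)}$, which holds because $d$ preserves the length of an elementary tensor. The hypothesis $\order(\et)\ge 1$ is needed precisely so that the normalization $\frac{1}{\order(\et)}$ is defined.
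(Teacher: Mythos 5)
Your proof is correct and follows essentially the same route as the paper: the general method of Subsection~\ref{section method}, the derivation identity of Lemma~\ref{split is a derivation} applied to every consecutive pair, the computation $\pi\rho(a)=\order(a)\,a$, the cyclic-equivalence handling of the boundary group via Lemma~\ref{splitting_the_first}, and the crucial observation that (in the absence of transitions) $d$ preserves length so that both $\stwo^{n-1}d^{n-1}$ and $d^n\stwo^n$ carry the same factor $\frac{1}{\order(\et)}$. Your write-up is simply a more detailed version of the paper's argument.
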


\begin{proof} First note that $\order(\et) = \order(d^n_i(\et))$, $i=0,...,n$, when
 $\et$ has no transition: this is a simple and crucial property of chains with no transition.
Therefore we can use our general method of proof, and simply take into account the factor $\frac{1}{\order(\et)}$
in the definition of $\stwo^{n}$ and $\stwo^{n-1}$ after comparing terms.
From Lemma \ref{split is a derivation}, we have
$$\rho(ab) = a \rho(b) + \rho(a)b,$$ for $a, b \in \AC$ , and therefore only the terms corresponding to
$\pi\rho(a)\otimes b = \order(a)(a\otimes b)$ are left: this gives us $\order(\et) \et$ in total.
Now, considering that we divide by $\order(\et)$, we obtain $\et$, as claimed.
\end{proof}

\section{Cohomology of the Cuntz algebra}

We recall that $\AC=\ell^1(\Cuntz)/ \C \zero$ and $\ell^1(\Cuntz)$ have the same cyclic and simplicial cohomology, and we continue to work with $\AC$ . We start by identifying the zero cyclic and simplicial cohomology groups of $\AC$.

\begin{Definition}
A \dt{trace} on a Banach algebra $\A$ is an element $\tau$ of the dual of the algebra, such that
 $\tau(ab)=\tau(ba)$, for all $a$, $b\in \A$.
\end{Definition}
Observe that it follows that the zero cyclic and simplicial cohomology groups are both exactly the space the traces on any algebra.

Let $\phi$ be a cyclic cocycle. In general, we wish to cobound a cyclic cochain $\chi$ such that $\psi-\dif\chi$ vanishes: this will not be possible in even dimensions, where we first need to take off a trace.

\begin{Lemma}\label{Lemma_trace} Let $\A$ be a Banach algebra and $\tau : \A \to \mathbb{C}$ a bounded trace on $\A$. Let $\tau^{(2n)}\in\CC^{{2n}}(\A)$ be defined by $$\tau^{(2n)}(\asubchain 1 {2n+1})=\tau(a_1 a_2 \cdots a_{2n+1}).$$ Then $\tau^{(2n)}$ is a cyclic cocycle.
\end{Lemma}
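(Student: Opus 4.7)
The plan is to prove the two conditions separately: first that $\tau^{(2n)}$ is cyclic, and then that it is a Hochschild cocycle. Boundedness is automatic since $\tau$ is bounded and the product map $\A^{2n+1} \to \A$ is continuous.

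For cyclicity, I would evaluate on an elementary tensor $\et = a_1 \otimes \cdots \otimes a_{2n+1}$. Because the degree $2n$ is even, $\cyc(\et) = a_{2n+1} \otimes a_1 \otimes \cdots \otimes a_{2n}$ with no sign. Thus
\begin{equation*}
\tau^{(2n)}(\cyc \et) = \tau(a_{2n+1}\, a_1 a_2 \cdots a_{2n}) = \tau\bigl((a_1 \cdots a_{2n})\, a_{2n+1}\bigr) = \tau^{(2n)}(\et),
\end{equation*}
where the middle equality is a single application of the trace property to the pair $a_{2n+1}$ and $a_1 \cdots a_{2n}$.

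For the cocycle condition, I would substitute $\tau^{(2n)}$ into the Hochschild coboundary formula and use associativity to collapse each multiplication in the interior summands. On $(a_1,\ldots,a_{2n+2})$ I get
\begin{equation*}
(\dif^{2n}\tau^{(2n)})(a_1,\ldots,a_{2n+2}) = \tau(a_2 \cdots a_{2n+2} a_1) + \sum_{j=1}^{2n}(-1)^j \tau(a_1 \cdots a_{2n+2}) + (-1)^{2n+1}\tau(a_1 \cdots a_{2n+2}).
\end{equation*}
The first term equals $\tau(a_1 a_2 \cdots a_{2n+2})$ by the trace property. Thus every term of the sum equals $\tau(a_1 \cdots a_{2n+2})$ up to sign, and the signs contribute $1 + \sum_{j=1}^{2n}(-1)^j + (-1)^{2n+1} = 1 + 0 - 1 = 0$. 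So $\dif^{2n}\tau^{(2n)} = 0$, completing the proof.

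No step here poses a real obstacle; the only subtle point is the parity input. Cyclicity uses that $(-1)^{2n}=1$ so the signed cyclic shift is an unsigned shift, and the cocycle calculation uses that $\sum_{j=1}^{2n}(-1)^j = 0$ together with the cancellation of the boundary terms $+1$ and $(-1)^{2n+1} = -1$. Both ingredients rely on $2n$ being even, which is why the statement is phrased for even-degree traces.
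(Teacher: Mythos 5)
Your verification is correct: the direct check of cyclicity (using that the signed shift carries no sign in even degree) and the cancellation $1+\sum_{j=1}^{2n}(-1)^j+(-1)^{2n+1}=0$ in the coboundary formula is exactly the routine computation the paper dismisses as ``immediate.'' Nothing to add beyond noting, as you do, that both steps genuinely use the evenness of the degree, which is why only $\tau^{(2n)}$ appears in the paper.
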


\begin{proof} The proof is immediate. \end{proof}

\begin{Proposition} \label{removing trace} Let $\phi\in\ZC^{2n}(\AC)$, $n\ge 1$, and let $\lambda = \phi(\one \otimes \cdots \otimes \one)$. Let $\tau : \AC \to \mathbb{C}$ be defined by $\tau(p_\beta q_\beta)=\tau(\one)=\lambda$, zero elsewhere. Then,
\begin{enumerate}
\item $\phi-\tau^{(2n)}\in\CC^{{2n}}(\AC)$ is a cyclic cocycle such that
$$(\phi-\tau^{(2n)})(\one \otimes \cdots \otimes \one)=0;$$
\item if $\lambda\ne 0$, $\tau^{(2n)}$ is not a coboundary.
\end{enumerate}
\end{Proposition}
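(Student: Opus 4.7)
The plan is to dispatch the two parts separately, concentrating the real work on part (i) (where one must check that $\tau$ is a trace), and making part (ii) a clean one-line consequence of the sign of the cyclic shift.

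For (i), I would first verify that $\tau$ is a bounded trace on $\AC$; boundedness is clear since $|\tau(a)|\le |\lambda|\,\|a\|_1$. The trace identity $\tau(xy)=\tau(yx)$ is checked on pairs of basis elements $x=\pa1\qb1$, $y=\pa2\qb2$ by case-splitting on whether $\beta_1$ is a prefix of $\alpha_2$, $\alpha_2$ is a prefix of $\beta_1$, or neither; in each case one reads off from the Cuntz multiplication rules that $xy$ equals a diagonal element $p_\epsilon q_\epsilon$ if and only if $yx$ does, and then they necessarily yield the same value of $\tau$. Once $\tau$ is known to be a trace, Lemma~\ref{Lemma_trace} makes $\tau^{(2n)}$ a cyclic cocycle, so $\phi-\tau^{(2n)}\in \CC^{2n}(\AC)$ is also a cyclic cocycle. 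Evaluating at $\one\otimes\cdots\otimes\one$ (with $2n+1$ factors) gives $\phi(\one,\ldots,\one)=\lambda$ by the definition of $\lambda$, and $\tau^{(2n)}(\one,\ldots,\one)=\tau(\one)=\lambda$, so the difference vanishes there, proving (i).

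For (ii), suppose toward a contradiction that $\tau^{(2n)}=\dif^{2n-1}\chi$ for some $\chi\in \CC^{2n-1}(\AC)$. Chains for $\chi$ live in $\AC^{\otimes 2n}$, and according to the definition of the signed cyclic shift, on a chain of length $2n$ the operator $\cyc$ carries a sign $(-1)^{2n-1}=-1$. Applied to the chain $\one^{\otimes 2n}$ the underlying permutation is trivial, so $\cyc(\one^{\otimes 2n})=-\one^{\otimes 2n}$, and cyclicity of $\chi$ forces
\[
\chi(\one^{\otimes 2n})=\chi\bigl(\cyc(\one^{\otimes 2n})\bigr)=-\chi(\one^{\otimes 2n}),
\]
hence $\chi(\one^{\otimes 2n})=0$. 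Every term appearing in $(\dif^{2n-1}\chi)(\one^{\otimes(2n+1)})$ is $\pm\chi$ evaluated on a length-$2n$ chain obtained from $\one^{\otimes(2n+1)}$ either by multiplying two adjacent entries together or by the leading/trailing contribution in the Hochschild differential; in every case the resulting chain is still $\one^{\otimes 2n}$, so every term vanishes and $(\dif\chi)(\one,\ldots,\one)=0$. But $\tau^{(2n)}(\one,\ldots,\one)=\lambda\neq 0$, which is the required contradiction.

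The main obstacle is the trace verification for $\tau$: it is routine but requires care, because the product in $\Cuntz$ has several non-trivial cases depending on prefix relationships between the strings. Part (ii), by contrast, is essentially automatic once one spots that a cyclic cochain of odd degree must vanish on tensor powers of the unit, since the cyclic shift on an even number of identical factors has sign $-1$.
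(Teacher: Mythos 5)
Your proof is correct and, for part (i), essentially the paper's argument: show that $\tau$ is a bounded trace (your case-split on the prefix relations between $\beta_1$ and $\alpha_2$ is the same routine verification the paper performs by writing $\beta=\alpha\gamma$ and separating the cases $\beta=\beta'$ and $\beta\ne\beta'$), invoke Lemma~\ref{Lemma_trace}, and evaluate both cocycles at $\one\otimes\cdots\otimes\one$. For part (ii) the paper offers only the one-line remark that $d^{2n}(\one\otimes\cdots\otimes\one)=0$, which is terse: with the paper's indexing, the boundary relevant to cobounding a $2n$-cochain is $d^{2n-1}$, and $d^{2n-1}(\one^{\otimes(2n+1)})=\one^{\otimes 2n}$ is not zero in the Hochschild complex but only modulo $(I-\cyc)$, so an appeal to cyclicity of the would-be cobounding cochain is genuinely needed. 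Your argument supplies exactly that step on the dual side: a cyclic $(2n-1)$-cochain must vanish on $\one^{\otimes 2n}$ because $\cyc$ acts there as $-I$, hence every cyclic coboundary vanishes at $\one^{\otimes(2n+1)}$, whereas $\tau^{(2n)}$ takes the value $\lambda\neq 0$ there. This also makes explicit why ``coboundary'' must be read in the cyclic sense, which is the only correct reading: by Theorem~\ref{simplicial vanishes} one has $\HH^{2n}(\AC)=0$, so $\tau^{(2n)}$ is in fact a simplicial coboundary, and without cyclicity the statement would fail.
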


\begin{proof} To prove the first statement, it is sufficient to show that $\tau$ is a bounded trace (Lemma \ref{Lemma_trace}). This is straightforward. Let an arbitrary $\beta$ be written as $\beta=\alpha\gamma$ for some $\alpha$, $\gamma$. For $\tau$ to be a trace we need $\tau(p_{\alpha}p_{\gamma} q_{\beta'}) = \tau(p_\gamma q_{\beta'} p_\alpha)$ and $\tau(p_{\beta'}q_{\alpha}q_{\gamma} ) = \tau(q_\alpha p_{\beta'} q_\gamma)$. If $\beta=\beta'$, then all of these terms are $\tau (p_\gamma q_\gamma)$, and are therefore equal. If $\beta\ne\beta'$, then $\tau$ is zero on all terms.

The second statement is clear as $d^{2n}(\one \otimes \cdots \otimes \one)=0$.
\end{proof}

\begin{Definition}
Let $\phi\in\ZC^n(\ell^1(S))$. We say that $\phi$ is \dt{$\one$-normalized} if it vanishes on $\et = \one \otimes \cdots \otimes \one$.
\end{Definition}

\begin{Theorem} \label{cobounding one-normalized}Let $n\ge 1$, and let $\phi\in\ZC^{n}(A)$ be a $\one$-normalized cyclic $n$-cocycle. Then $\phi$ is a cyclic coboundary.
\end{Theorem}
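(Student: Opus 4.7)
The plan is to cobound $\phi$ in three movements, combining the two contracting-homotopy constructions of Section~3 and Subsection~\ref{section_second_reduction}. The unifying observation is that for any cyclic $n$-cocycle $\psi$ and any operator $T=sd+ds$ arising from a map $\rho\colon\AC\to\AC\oh\AC$, the adjoint computation
\[ T^{*}\psi=\delta(\sigma^{n-1}\psi)+\sigma^{n}(\delta\psi)=\delta(\sigma^{n-1}\psi) \]
shows that $T^{*}\psi$ is a cyclic coboundary (using $\delta\psi=0$ and Lemma~\ref{s_is_syclic} to see that $\sigma^{n-1}\psi$ is cyclic); moreover $T^{*}\psi$ is itself a cyclic cocycle, so the construction iterates and survives inside polynomials in $T^{*}$.

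First I would apply this to the first reduction, with $P=\sdplusds$ built from $\rho(\palpha\qbeta)=\palpha\otimes\qbeta$. Expanding $\Phi^{*}=\prod_{j=1}^{n+1}(I-P^{*}/j)$ produces a cyclic $(n-1)$-cochain $\chi_{1}'$ with $\phi-\Phi^{*}\phi=\delta\chi_{1}'$. Write $\phi_{1}:=\Phi^{*}\phi$; one checks that $\phi_{1}$ is again a $\one$-normalized cyclic cocycle, since $P(\one^{\otimes(n+1)})=0$ by direct computation (equivalently, the $k=0$ case of Theorem~\ref{k_transitions}), so $\phi_{1}(\one^{\otimes(n+1)})=\phi(\one^{\otimes(n+1)})=0$.

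Second I would run the same device with the second reduction $\stwo$, defining $\chi_{2}:=\sigma_{\stwo}^{n-1}(\phi_{1})$, which is cyclic by the elementary-tensor version of Lemma~\ref{s_is_syclic} noted after Definition~\ref{def-zmap}. The cocycle identity gives $\delta\chi_{2}(\et)=\phi_{1}((\sdplusdstwo)\et)$. When $\et$ has no transitions and $\order(\et)\ge 1$, Theorem~\ref{no_transitions} yields $(\sdplusdstwo)\et\sim\et$, and cyclicity of $\phi_{1}$ forces $\delta\chi_{2}(\et)=\phi_{1}(\et)$; when $\et=\one^{\otimes(n+1)}$ both sides vanish, since $\stwo$ kills every chain of length zero and $\phi_{1}$ is $\one$-normalized. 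Hence $\phi_{2}:=\phi_{1}-\delta\chi_{2}$ is a cyclic $n$-cocycle that vanishes on every elementary tensor without transitions.

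Finally I would invoke the first reduction once more, now on $\phi_{2}$. By Theorem~\ref{no_transition_left}, $\Phi(\et)$ is cyclically equivalent to a sum of no-transition chains for every $\et$, so $\Phi^{*}\phi_{2}(\et)=\phi_{2}(\Phi(\et))=0$ by cyclicity and vanishing of $\phi_{2}$. Repeating the first-movement argument for $\phi_{2}$ then gives $\phi_{2}-\Phi^{*}\phi_{2}=\delta\chi_{3}$, i.e.\ $\phi_{2}=\delta\chi_{3}$, for a cyclic $(n-1)$-cochain $\chi_{3}$. Combining, $\phi=\delta(\chi_{1}'+\chi_{2}+\chi_{3})$ is a cyclic coboundary. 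The main obstacle is the mismatch between the two reductions: $\sdplusdstwo$ only behaves as a contracting homotopy on chains with no transitions, so the argument must first use the first reduction to push the obstruction onto no-transition chains, cobound it there with $\stwo$, and then use the first reduction a second time to mop up the transition-supported residue produced by that step.
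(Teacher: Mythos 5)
Your proposal is correct and follows essentially the paper's own route: first cobound on no-transition chains using the $\stwo$-homotopy and Theorem~\ref{no_transitions}, then use $\Phi$ together with Theorem~\ref{no_transition_left}, the cocycle identity and cyclicity (via Lemma~\ref{s_is_syclic}) to cobound the remainder. The only difference is your redundant first movement: the paper applies the $\stwo$-step directly to $\phi$, so your movements two and three already suffice and the initial application of $\Phi^{*}$ buys nothing, though it does no harm.
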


\begin{proof} First, let $\et=\chain\in\Cc_n(\AC,\AC) $ be a chain with no transitions such that $\order(\et)\ge 1$.
Then $\phi(I-\sdplusdstwo)(\et)=0$ (Theorem \ref{no_transitions}). Let $\psi_1\in\CC^{n-1}$ be given by $\psi_1=\phi \circ s^{n-1}$, and let $\phi_1 := (\phi - \delta\psi_1)$. Then $\phi_1(\et)=0$, and we note that $\phi_1(\one \otimes \cdots \otimes \one)= 0$.

For an arbitrary $\et=\chain\in\Cc_n(\AC,\AC) $, we deduce from Theorem \ref{no_transition_left} that
$\phi_1(\Phi(\et))=0$. It is standard that $\Phi(\et)=I-({\tilde s}^{n-1}d^{n-1}+d^{n}{\tilde s}^n)$ for some map $\tilde s$, and therefore $\psi_2\in\CC^{n-1}$ given by $\psi_2=\phi_1 \circ \tilde s^{n-1}$ is such that $(\phi_1 - \delta(\psi_2))(\et)=0$. As all maps considered are clearly bounded, we deduce that $(\phi-\delta(\psi_1+\psi_2))(x) =0$ for all $x\in\Cc_n(\AC,\AC) $.
\end{proof}

The next theorem is an immediate consequence of Proposition \ref{removing trace} and Theorem \ref{cobounding one-normalized}.

\begin{Theorem} \label{cyclic cohomology} The cyclic cohomology of the Cuntz algebra $\ell^1(\Cuntz)$ and of the reduced Cuntz algebra $\AC$  is zero in odd dimensions, and is $\mathbb{C}$ in even dimensions greater than 0.
\end{Theorem}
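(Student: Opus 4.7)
The plan is to derive the theorem directly from Proposition \ref{removing trace} and Theorem \ref{cobounding one-normalized}, splitting the argument into odd and even dimensions, and then transferring the result from $\AC$ to $\ell^1(\Cuntz)$ via the identification of their cyclic cohomologies already noted in the paper.

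For odd $n\ge 1$, the key observation is that every cyclic cocycle $\phi\in\ZC^{n}(\AC)$ is automatically $\one$-normalized. Indeed, $\cyc(\one\tp\cdots\tp\one)=(-1)^n\,\one\tp\cdots\tp\one$, so cyclicity of $\phi$ forces $\phi(\one\tp\cdots\tp\one)=-\phi(\one\tp\cdots\tp\one)$, i.e.\ $\phi(\one\tp\cdots\tp\one)=0$. Theorem \ref{cobounding one-normalized} then gives $\phi\in\BC^n(\AC)$, so $\HC^n(\AC)=0$ in odd dimensions.

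For even $n=2k\ge 2$, given $\phi\in\ZC^{2k}(\AC)$ set $\lambda=\phi(\one\tp\cdots\tp\one)$. Proposition \ref{removing trace}(i) produces a trace $\tau$ on $\AC$ (of weight $\lambda$) such that $\phi-\tau^{(2k)}$ is a $\one$-normalized cyclic cocycle, hence a cyclic coboundary by Theorem \ref{cobounding one-normalized}. Writing $\tau_1$ for the trace corresponding to $\lambda=1$, this shows $[\phi]=\lambda[\tau_1^{(2k)}]$ in $\HC^{2k}(\AC)$, so the cohomology is at most one-dimensional. Part (ii) of Proposition \ref{removing trace} asserts $\tau^{(2k)}\notin\BC^{2k}(\AC)$ whenever $\lambda\ne 0$, so $[\tau_1^{(2k)}]$ is nonzero and $\HC^{2k}(\AC)\iso\C$.

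Finally, the identification $\HC^n(\ell^1(\Cuntz))\iso\HC^n(\AC)$ was established earlier via \cite[Theorem 4.2]{[Ly98]}, so the same conclusions transfer to $\ell^1(\Cuntz)$. There is essentially no obstacle here: the argument is purely an organisational combination of two already-proved results, and the only mildly substantive ingredient is the automatic vanishing of $\phi$ at $\one\tp\cdots\tp\one$ in odd degrees, which is what removes the need for a trace-subtraction step in that case.
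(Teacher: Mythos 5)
Your proposal is correct and follows essentially the same route as the paper: odd-degree cyclic cocycles vanish automatically at $\one\otimes\cdots\otimes\one$ (your explicit sign computation with $\cyc$ is exactly the reason the paper has in mind) and are cobounded by Theorem~\ref{cobounding one-normalized}, while in even degrees one subtracts the trace cocycle from Proposition~\ref{removing trace} and uses part (ii) to see the class $[\tau^{(2n)}]$ is nonzero, giving a one-dimensional cohomology; the passage to $\ell^1(\Cuntz)$ via \cite[Theorem 4.2]{[Ly98]} is likewise as in the paper. Your write-up merely makes explicit the details the paper leaves implicit.
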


\begin{proof} In odd degrees, a cyclic cocycle $\phi\in\ZC^{2n}(A)$ is always such that $\phi(\one \otimes \cdots \otimes \one)=0$, and is therefore a coboundary (Theorem~\ref{cobounding one-normalized}). In even degrees, the results immediately follows from Proposition \ref{removing trace} and Theorem \ref{cobounding one-normalized}.
\end{proof}

\begin{Theorem}\label{simplicial vanishes}
The simplicial cohomology of the Cuntz algebra $\ell^1(\Cuntz)$ and of the reduced Cuntz algebra $\AC$  is
zero in degrees $2$ and above, and the first simplicial cohomology group is isomorphic to the space of traces vanishing at~$\one$.
\end{Theorem}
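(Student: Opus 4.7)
My plan is to invoke the Connes-Tzygan long exact sequence for the unital algebra $\AC$ (available by~\cite{[He92]}), together with Theorem~\ref{cyclic cohomology}. Since $\HC^{2k+1}(\AC)=0$ for every $k\ge 0$, the portion of the sequence around $\HH^n(\AC)$ collapses, for each $n\ge 1$, to the short exact sequence
\[ 0 \to \HC^n(\AC)/S\bigl(\HC^{n-2}(\AC)\bigr) \to \HH^n(\AC) \to \ker\bigl(S : \HC^{n-1}(\AC) \to \HC^{n+1}(\AC)\bigr) \to 0. \]
For $n\ge 2$ even, the vanishing of $\HC^{n\pm 1}(\AC)$ forces the right-hand kernel to be trivial; for $n\ge 3$ odd, the vanishing of $\HC^{n-2}(\AC)$ and $\HC^n(\AC)$ forces the left-hand cokernel to be trivial; and for $n=1$ both of $\HC^{-1}$ and $\HC^1$ vanish, yielding $\HH^1(\AC) \cong \ker\bigl(S : \HC^0(\AC) \to \HC^2(\AC)\bigr)$. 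The whole theorem thus reduces to understanding the periodicity map $S$ on positive even-degree cyclic cohomology.

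My tool for this is naturality with respect to the unital inclusion $i : \C \to \AC$, $1 \mapsto \one$. The contravariantly induced map $i^* : \HC^n(\AC) \to \HC^n(\C)$ restricts a cocycle to the chain $\one\otimes\cdots\otimes\one$. By Proposition~\ref{removing trace} and Theorem~\ref{cobounding one-normalized}, for $k\ge 1$ the class of a cyclic $2k$-cocycle is determined by its value at $\one\otimes\cdots\otimes\one$, so $i^*$ is an isomorphism in every positive even degree. Since $\HC^*(\C) \cong \C[u]$ with $\deg u=2$ and $S$ multiplication by $u$, naturality of $S$ gives the commutative square
\[ \begin{array}{ccc} \HC^{2k}(\AC) & \xrightarrow{\,S\,} & \HC^{2k+2}(\AC) \\ \downarrow\, i^* & & \downarrow\, i^* \\ \HC^{2k}(\C) & \xrightarrow{\,S_{\C}\,} & \HC^{2k+2}(\C) \end{array} \]
in which $S_{\C}$ is an isomorphism. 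For $k\ge 1$, both verticals are isomorphisms as well, so the top $S$ is forced to be an isomorphism, killing $\HH^n(\AC)$ for every $n\ge 3$.

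For the remaining case $k=0$, the right vertical is still an isomorphism but the left vertical is now the trace-evaluation map $\tau \mapsto \tau(\one)$. Commutativity identifies $S : \HC^0(\AC) \to \HC^2(\AC)$, after transport through $i^*$, with a nonzero scalar multiple of $\tau \mapsto \tau(\one)$. Hence $S$ is surjective (giving $\HH^2(\AC)=0$) with kernel exactly the space of traces on $\AC$ vanishing at $\one$, which combined with the first paragraph gives the asserted description of $\HH^1(\AC)$. The main obstacle is to verify that $\AC$ satisfies the hypotheses of the Connes-Tzygan theorem from~\cite{[He92]} and that the sequence is natural under unital homomorphisms; once these are secured, the rest is a diagram chase against the cyclic cohomology already computed.
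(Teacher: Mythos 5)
Your proposal is correct, and its skeleton is the same as the paper's: invoke the Connes--Tzygan sequence for the unital algebra $\AC$, use the odd-degree vanishing from Theorem~\ref{cyclic cohomology} to split it into short exact sequences, and reduce everything to the behaviour of the periodicity map $S$ on even-degree cyclic cohomology, identifying $\HH^1(\AC)$ with $\ker\bigl(S:\HC^0(\AC)\to\HC^2(\AC)\bigr)$, i.e.\ the traces vanishing at $\one$. Where you genuinely diverge is in how that behaviour of $S$ is established: the paper simply quotes the computation (citing \cite{[C2010]}) that $S\tau^{(2n-2)}=\lambda_n\tau^{(2n)}$ with $\lambda_n\neq 0$, whereas you deduce it by naturality of $S$ along the unital inclusion $\C\to\AC$, combined with the classical fact that $\HC^{*}(\C)\cong\C[u]$ with $S$ acting invertibly, and with the observation that evaluation at $\one\otimes\cdots\otimes\one$ induces an isomorphism $\HC^{2k}(\AC)\to\HC^{2k}(\C)$ for $k\ge 1$ (injectivity from Theorem~\ref{cobounding one-normalized}, nontriviality from Proposition~\ref{removing trace}(ii); note that the latter only rules out \emph{cyclic} coboundaries, which is exactly what is needed here since a cyclic odd-degree cochain kills $\one^{\otimes 2k}$). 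Your route avoids any explicit formula for $S$ on trace-type cocycles, at the cost of needing two external inputs: the (standard, finite-dimensional) computation of the continuous cyclic cohomology of $\C$ together with the nonvanishing of $S_{\C}$, and the naturality of the Banach-algebra Connes--Tzygan sequence under unital homomorphisms, which the paper asserts in Section~2 with reference to \cite{[He92]}. Granting those, your diagram chase is complete and yields exactly the statement of the theorem, including surjectivity of $S:\HC^0\to\HC^2$ (hence $\HH^2=0$) from the existence of a bounded trace with $\tau(\one)\neq 0$.
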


\begin{proof}
As $\AC$  is unital, by \cite[Theorem~16]{[He92]}, the Connes-Tzygan sequence for $\AC$  exists.
Since the cyclic cohomology of $\AC$  vanishes in all odd degrees, the long exact sequence breaks up to give exact sequences
\[ 0\to \HH^{2n-1}(\A) \xrightarrow{B} \HC^{2n-2}(\A) \xrightarrow{S} \HC^{2n}(\A) \to \HH^{2n}(\A) \to 0 \]
for all $n\geq 1$\/.
 By Theorem~\ref{cyclic cohomology}, every cyclic $2n$-cocycle is cohomologous to one of the form $\tau^{(2n)}$ (a one-dimensional space -- see Proposition \ref{removing trace}) and we have $\tau^{(2n)}=\lambda_n^{-1}S\tau^{(2n-2)}$ for some constant $\lambda_n$ (see \cite{[C2010]} for a proof). Thus $S: \HC^{2n-2}(\A)\to \HC^{2n}(\A)$ is surjective for all $n\ge 1$, and bijective for $n\ge 2$ (as both spaces are one-dimensional, and the map $S$ acts as the identity on cochains given by traces). Therefore $\HH^{2n}(\A)=0$ for all $n\ge 1$ and $\HH^{2n-1}(\A)=0$ for all $n\geq 1$\/. The first cohomology group $\HH^1(\A)$
is isomorphic to the kernel of $S$ which can be easily identified as the space of traces vanishing at~$\one$
 \end{proof}
 
 We will give a precise description of the space of traces on $\AC$ in Lemma \ref{hh0}.

\section{Free Algebra}\label{section_Free_Algebra}

The Cuntz semigroup $\Cuntz$ contains two natural copies of the free semigroup on $m$ generators, which we will denote $\FS$. One of these semigroups is generated by $p_1,\ldots, p_m$ and the other by $q_1,\ldots, q_m$.
These inclusions are similar to the inclusions of the two copies of $\Z_+$,
in the case of the bicyclic semigroup, which was considered in~\cite{[GW2010]}.
In that paper, both of the semigroups, which in that case were isomorphic to $\Z_+$, gave rise to some of the cohomology.

In the case of the bicyclic semigroup algebra we have that the simplicial cohomology is trivial for dimensions strictly greater than~$1$,
as is the case for $\ell^1(\Z_+)$, see~\cite{[GJW]}.
Given this, it is natural to ask whether $\ell^1(\FS)$ also has trivial simplicial cohomology groups for dimensions greater than~$1$.
We will see below in Theorem~\ref{tensor-sim} that this is the case. This theorem generalizes the result for $\ell^1(Z_+)$, given in~\cite{[GJW]}.
In fact the proof is very similar to that given in that paper, in that it goes via proving that the cyclic cohomology of
$\ell^1(\FS)$ is the same as that of $\C$ in all but dimension zero, and then deduces the result for simplicial cohomology by using the Connes-Tzygan long exact sequence.

The result on cyclic cohomology holds in slightly more generality than for~$\ell^1(\FS)$.
It holds for any free $\ell^1$-algebra over a Banach space~$V$.
This free algebra is usually called the {\em tensor algebra} for the Banach space for reasons which are made clear by the following definition.

\begin{Definition}\label{tensor-alg}
Let $V$ be a complex Banach space. We define the \dt{tensor algebra over $V$}, denoted $T(V)$, to be the Banach space given by
 $$ T(V) = \ell^1-\bigoplus_{k=0}^\infty \hat\bigotimes^k V. $$
 The tensor products, $\hat\bigotimes^k V$, are the (completed) projective tensor product with the usual norm, with the convention $\bigotimes^0 V := \mathbb{C}$. The direct sum is the $\ell^1$-direct sum.

 The Banach algebra product is given by the linear extension of the isometric isomorphisms:
  $$ \mu:\hat\bigotimes^k V \quad \hat\otimes \quad \hat\bigotimes^l V  \to \hat\bigotimes^{k+l} V,$$
  which is defined by concatenation on elementary tensors:
  $$ \mu( (v_1 \otimes \cdots \otimes v_k) \otimes (w_1 \otimes \cdots \otimes v_l) ) = (v_1 \otimes \cdots \otimes v_k \otimes w_1 \otimes \cdots \otimes v_l).$$
  Elements of $\bigotimes^0 V = \mathbb{C}$ will be denoted by $\lambda \one$: with this notation, $\one$ is the unit of the Banach algebra $T(V)$.
\end{Definition}

Example: If we take $V$ to be $\ell^1_m$, the $m$-dimensional $\ell^1$-space, with standard basis $e_1, \ldots, e_m$,
then the tensor algebra~$T(\ell^1_m)$, is isometrically isomorphic to the free semigroup algebra~$\ell^1(\FS)$,
which we take to be generated by $p_1,\ldots, p_m$, via the unital homomorphism which takes $e_j$ to $p_j$ for ($1\le j \le m$).
For example $ (e_1 + 4 e_2)\otimes ( e_3 - e_1) $ maps to $ p_1 p_3 - p_1^2 + 4 p_2 p_3 - 4 p_2 p_1$.

\begin{Theorem}\label{tensor-cyclic} The cyclic cohomology of the tensor algebra~$T(V)$ of a Banach space~$V$ is zero, in odd dimensions, and is equal to $\C$ in even dimensions greater than~$0$.
\end{Theorem}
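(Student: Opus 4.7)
The plan is to mimic the approach used for the reduced Cuntz algebra $\AC$ in Sections~4--5, but in a substantially simpler form: multiplication in $T(V)$ is just concatenation of tensors and there is no analog of ``orthogonal transitions'', so the first reduction (Theorems~\ref{k_transitions} and~\ref{no_transition_left}) can be entirely bypassed. First, I would define $\rho : T(V) \to T(V) \oh T(V)$ on a simple tensor $a = v_1 \otimes \cdots \otimes v_k \in \hat\bigotimes^k V$ (with $k \ge 1$) by
\[
\rho(a) = \sum_{j=0}^{k-1} (v_1 \otimes \cdots \otimes v_j) \otimes (v_{j+1} \otimes \cdots \otimes v_k),
\]
with the empty tensor interpreted as $\one$ and $\rho(\one) = 0$. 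Set $\bar\rho(a) = \order(a)^{-1}\rho(a)$ for $\order(a) \ge 1$, where $\order(a) = k$ is the tensor length, and build $\stwo^n : \Cc_n(T(V),T(V)) \to \Cc_{n+1}(T(V),T(V))$ exactly as in Definition~\ref{def-zmap}. The two algebraic facts to verify are $\pi\rho(a) = \order(a)\cdot a$ and, more importantly, that $\rho$ is a genuine \emph{derivation}: $\rho(ab) = a\rho(b) + \rho(a)b$ for all $a,b \in T(V)$; both are immediate from the concatenation rule.

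The core step is to establish the analog of Theorem~\ref{no_transitions}: for any elementary tensor $\et = a_1 \otimes \cdots \otimes a_{n+1}$ with $\order(\et) \ge 1$, $(\sdplusdstwo)(\et)$ is cyclically equivalent to $\et$. Applying the general method of Subsection~\ref{section method}, the derivation identity makes the combination $\rho(a_i a_{i+1}) - a_i \rho(a_{i+1}) - \rho(a_i) a_{i+1}$ vanish identically, so only the $\pi\rho(a_i) \otimes a_{i+1} = \order(a_i)(a_i \otimes a_{i+1})$ terms survive from the interior sum of Lemma~\ref{split_everywhere}; by the same computation, the boundary terms from Lemma~\ref{splitting_the_first} contribute $\order(a_{n+1})\et$ up to cyclic equivalence. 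Summing these and dividing by the prefactor $\order(\et)$ built into the definition of $\stwo^n$ (using that $\order$ is preserved on each elementary tensor appearing in $d^{n-1}(\et)$) yields $\et$.

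The remainder of the argument proceeds as in the Cuntz case. Any one-normalized $\phi \in \ZC^n(T(V))$ is a coboundary: set $\psi = \phi \circ \stwo^{n-1}$, which is cyclic by the analog of Lemma~\ref{s_is_syclic} (whose proof goes through verbatim since the $1/\order(\et)$ prefactor is invariant under cyclic shifts), and use $\delta^n\phi = 0$ together with cyclicity of $\phi$ to obtain
\[
(\delta^{n-1}\psi)(\et) = \phi(\stwo^{n-1} d^{n-1} \et) = \phi((\sdplusdstwo)(\et)) = \phi(\et)
\]
for $\et$ with $\order(\et) \ge 1$; both sides vanish for $\et = \one^{\otimes n+1}$ by one-normalization. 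In odd dimensions every cyclic cocycle automatically vanishes on $\one^{\otimes n+1}$ (since $\cyc$ acts there by $(-1)^n$), and so is a coboundary. In even dimensions $2n \ge 2$, define the trace $\tau : T(V) \to \C$ by $\tau(\one) = 1$ and $\tau = 0$ on simple tensors of length $\ge 1$; then $\tau^{(2n)}$ is a non-coboundary cyclic cocycle (Lemma~\ref{Lemma_trace}, together with $d^{2n}(\one^{\otimes 2n+2}) = 0$), and subtracting $\lambda\tau^{(2n)}$ with $\lambda = \phi(\one^{\otimes 2n+1})$ reduces any $\phi \in \ZC^{2n}(T(V))$ to the one-normalized case. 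Hence $\HC^{2n}(T(V)) \cong \C$ in even dimensions $\ge 2$ and vanishes in odd dimensions.

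The main obstacle is verifying that the derivation identity really does collapse the general method of Subsection~\ref{section method} to the clean statement ``$\order(\et)\cdot\et$ up to cyclic equivalence'', together with the boundedness and order-preservation bookkeeping for $\stwo^n$ and $d^{n-1}$. Once that routine calculation is carried out, the tensor-algebra case is notably cleaner than the Cuntz case, precisely because no transition-counting is required and $\rho$ is a true derivation rather than only almost one.
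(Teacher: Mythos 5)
Your proposal is correct and follows essentially the same route as the paper: the same splitting map $\rho$ (a derivation on $T(V)$), the same length-weighted homotopy $\stwo^n$ as in Definition~\ref{def-zmap}, the analog of Theorem~\ref{no_transitions}, and then the trace-subtraction and cobounding argument of Proposition~\ref{removing trace} and Theorem~\ref{cobounding one-normalized}. The paper likewise skips the transition-counting first reduction, noting the tensor-algebra case is simpler, so there is no substantive difference.
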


\begin{proof} We define a bounded linear map $\sfg:\Cc_{n}(T(V),T(V))\to\Cc_{n+1}(T(V),T(V))$ in a way which is very close to that of Section \ref{section_second_reduction}, where we defined the map $\stwo^n$ to deal with elementary tensors with no transitions for the reduced Cuntz algebra.

We introduce some notation. For an elementary tensor $\etv =v_1 \otimes \cdots \otimes v_N \in \hat\bigotimes^N V$, we denote by $o(\etv)$ the order $N$ of $\etv$, and by $\etv_{[j,k]}$, $0<j\le k\le o(\etv)$, the element $v_j \otimes \cdots \otimes v_k \in \hat\bigotimes^{k-j+1} V$. If $j=k$, then $\etv_{[j,k]}:=\one$.

We define a bounded linear map $\rhov: T(V) \to T(V) \oh T(V)$.
First, for an elementary tensor $\etv =v_1 \otimes \cdots \otimes v_N \in
 \hat\bigotimes^N V$ with $N\ge 1$, let
 $$ \rho(\etv) = \sum_{k=0}^{N-1}  \etv_{[0,k]} \otimes \etv_{[k+1,N]},$$
 where the term corresponding to $k=0$ is $\one \otimes \etv$. (Note that $\rho(\one)=0$ as the sum is empty.)
Then $\rhov$ is given by $$\rhov(\etv)=\frac{1}{N} \rho(\etv)$$
 on elementary tensors, extended by linearity and boundedness to the whole of $T(V)$.
For $N=0$, both $\rho$ and $\rhov$ are defined to be the zero map.

In a way analogous to Definition \ref{def-zmap}, for $n\ge 0$, we define $\bar{s}^n_k$, $k=1, \ldots, n+1$,
from $\rhov$ as given by Definition \ref{def-s}, i.e.
\[ \bar{s}^n_i(\et) = (-1)^{i} \sum_{j=1}^{\infty} \subvchain 1{i-1}\otimes \rhov(\etv_i) \otimes \subvchain {i+1}{n+1}\, .\]
We finally define $\sfg^n : \Cc_{n}(T(V),T(V))\to\Cc_{n+1}(T(V),T(V))$ by
\begin{equation}\label{eq:dfn-sfg}
\sfg^n(\et) = \sum_{k=1}^{n+1} \frac{o(\etv_k)}{o(\et)} s^n_k,
\end{equation}
where $\et\in\Cc_{n}(T(V),T(V))$ is an elementary tensor (i.e. $\et=\subvchain{1}{n+1}$ where each $\etv_i$ is itself an elementary tensor in $\bigotimes^{o(\etv_i)} V)$. Note that $o(\et)$, the order of the elementary tensor, is $o(\et)=\sum_{i=1}^{n+1} o(\etv_i)$.
Again, we define the map to be zero on order zero elements.
We are now in a situation which is very close to that of Definition \ref{def-zmap}.
We can verify that statements analogous to Lemma \ref{split is a derivation} and Theorem \ref{no_transitions} hold.
\begin{enumerate}
\item $\rho(ab) = a \rho(b) + \rho(a)b,$ for $a, b \in T(V)$.
\item For an elementary tensor $\et=\subvchain{1}{n+1}\in\Cc_n(T(V),T(V))$, $n\ge 1$, $(\sdplusdsv)(\et)$ is cyclically equivalent to $\et$.
\end{enumerate}
The proof of (i) is easy, and as we have $\order(\et) = \order(d^n_i(\et))$, $i=0,...,n+1$, the proof of (ii) is essentially that of
Theorem \ref{no_transitions}.

To complete the proof, we only need to repeat the end of the argument for the Cuntz algebra. We use Proposition \ref{removing trace} for the trace $\tau$ defined by $\tau(\one)=\lambda$, $\tau$ zero elsewhere, do as in Theorem \ref{cobounding one-normalized} to show that reduced cocycles cobound (the case at hand is simpler in fact), and we have the result as in Theorem \ref{cyclic cohomology}.
\end{proof}

Before we can use the Connes-Tzygan long exact sequence to identify the simplicial cohomology of $T(V)$, we first need to identify the space of traces.
Let us denote by $\tau:\hat\bigotimes^k V \to \hat\bigotimes^k V$, the maps which satisfies
$\tau( v_1 \otimes \cdots \otimes v_k) = v_k \otimes v_1 \otimes \cdots \otimes v_{k-1}$.

We will use the same symbol, $\tau$, to denote the isometric isomorphisms which this induces from $T(V)$ to itself.

We denote the $\left(\hat\bigotimes^k V\right)^\tau$ the subspace of $\hat\bigotimes^k V$, which is invariant under the action of~$\tau$.
We call the direct sum
 $$  \ell^1-\bigoplus_{k=0}^\infty \left(\hat\bigotimes^k V\right)^\tau$$
 the {\em space of invariants}.

It is clear that a trace on $T(V)$ will be invariant under $\tau$, and similarly that any functional which is invariant under $\tau$ defines a trace on $T(V)$.
Thus we have:

\begin{Lemma} \label{invariants} Let $T(V)$ be the tensor algebra of the Banach space~$V$, then $\HC^0(T(V))$ is isomorphic to the dual of the space of invariants, i.e.,
 $$ \HC^0(T(V)) = \left(\ell^1-\bigoplus_{k=0}^\infty \left(\hat\bigotimes^k V\right)^\tau \right)'.$$
\end{Lemma}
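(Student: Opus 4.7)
The starting point is that $\HC^0(T(V))$ coincides with the space of bounded traces on $T(V)$, and by the paragraph preceding the lemma a bounded functional on $T(V)$ is a trace precisely when each homogeneous component $\phi_k : \hat\bigotimes^k V \to \mathbb{C}$ is invariant under the cyclic shift $\tau$. So the task reduces to identifying the family of such $\tau$-invariant component functionals with a single functional on the space of invariants.

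The plan is to average over the cyclic group. For each $k \ge 1$, set
$$ P_k = \frac{1}{k} \sum_{j=0}^{k-1} \tau^j : \hat\bigotimes^k V \to \hat\bigotimes^k V, $$
and $P_0 = \mathrm{id}$. Because $\tau$ is an isometric isomorphism on each $\hat\bigotimes^k V$, each $P_k$ is a bounded linear projection of norm at most $1$ whose range is exactly $(\hat\bigotimes^k V)^\tau$. Since $P_k \tau = P_k = \tau P_k$, a bounded functional $\phi_k$ on $\hat\bigotimes^k V$ is $\tau$-invariant if and only if $\phi_k = \phi_k \circ P_k$; equivalently, such a $\phi_k$ is determined by its restriction to $(\hat\bigotimes^k V)^\tau$, and conversely any bounded functional $\psi_k$ on the invariant subspace extends back to a $\tau$-invariant functional on $\hat\bigotimes^k V$ via $\psi_k \circ P_k$. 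This yields a Banach space isomorphism between the $\tau$-invariant functionals on $\hat\bigotimes^k V$ and the dual of $(\hat\bigotimes^k V)^\tau$.

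It then only remains to assemble these identifications across $k$. The dual of the $\ell^1$-direct sum $T(V) = \ell^1-\bigoplus_k \hat\bigotimes^k V$ is the $\ell^\infty$-product $\prod_k (\hat\bigotimes^k V)'$, and the trace subspace consists of those sequences $(\phi_k)$ for which every component is $\tau$-invariant. By the previous paragraph this is isomorphic to $\prod_k ((\hat\bigotimes^k V)^\tau)'$, which is precisely the dual of the $\ell^1$-direct sum $\ell^1-\bigoplus_k (\hat\bigotimes^k V)^\tau$, i.e.\ the dual of the space of invariants. The argument is essentially bookkeeping, and the only point requiring any care is that the averaging projection acts boundedly for the projective tensor norm, which holds because $\tau$ itself is an isometry.
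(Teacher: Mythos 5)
Your proposal is correct and follows the same route the paper takes: it identifies $\HC^0(T(V))$ with the $\tau$-invariant bounded functionals on $T(V)$ and then dualises component-wise, using that the dual of an $\ell^1$-direct sum is the $\ell^\infty$-product of the duals. The paper treats the passage from invariant functionals to the dual of the space of invariants as immediate and offers no further argument, whereas you make it precise via the norm-one averaging projections $P_k=\frac{1}{k}\sum_{j=0}^{k-1}\tau^j$; this is exactly the detail implicitly being used, so the two arguments coincide in substance.
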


We can now deduce the simplicial homology of $T(V)$, in the same manner as we deduce this for $\ell^1(\Cuntz)$ in Theorem~\ref{simplicial vanishes}.

\begin{Theorem} \label{tensor-sim}
Let $V$ be a Banach space, then the simplicial cohomology of the tensor algebra $T(V)$ is zero in degrees $2$ and above.
In dimension~$0$ it is the space of traces on $T(V)$ and in dimension~$1$, we have $\HH^1(T(V))$ is isomorphic to the spaces of traces which vanish on~$\one \in T(V)$.
\end{Theorem}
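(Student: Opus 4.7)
The plan is to follow exactly the strategy used in the proof of Theorem~\ref{simplicial vanishes}, feeding in Theorem~\ref{tensor-cyclic} and Lemma~\ref{invariants} in place of the Cuntz-algebra computations. Since $T(V)$ is unital, \cite[Theorem~16]{[He92]} guarantees that the Connes-Tzygan long exact sequence exists for~$T(V)$. By Theorem~\ref{tensor-cyclic}, the cyclic cohomology $\HC^{2n-1}(T(V))$ vanishes for every $n\ge 1$, so the long exact sequence breaks up into the short exact sequences
\[ 0\to \HH^{2n-1}(T(V)) \xrightarrow{B} \HC^{2n-2}(T(V)) \xrightarrow{S} \HC^{2n}(T(V)) \to \HH^{2n}(T(V)) \to 0 \]
for each $n\ge 1$.

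Next, I would analyse the connecting map $S$. For $n\ge 2$ both $\HC^{2n-2}(T(V))$ and $\HC^{2n}(T(V))$ are one-dimensional by Theorem~\ref{tensor-cyclic}, being generated by cocycles of the form $\tau^{(2k)}(a_1\otimes\cdots\otimes a_{2k+1})=\tau(a_1\cdots a_{2k+1})$, where $\tau$ is the trace dual to $\one$. As in the Cuntz case, one verifies (see \cite{[C2010]}) that $S\tau^{(2k-2)} = \lambda_k\tau^{(2k)}$ for some nonzero constant $\lambda_k$, so $S$ is bijective for $n\ge 2$ and hence $\HH^{2n-1}(T(V))=\HH^{2n}(T(V))=0$ for $n\ge 2$. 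For $n=1$, the same identity shows that $S:\HC^0(T(V))\to \HC^2(T(V))$ is surjective, so $\HH^2(T(V))=0$, and then $\HH^1(T(V))\iso \ker S$.

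Finally, I would identify $\ker S$ with the space of traces vanishing at~$\one$. By Lemma~\ref{invariants}, $\HC^0(T(V))$ is the full space of traces on $T(V)$, and by the construction of $\tau^{(2)}$ in Proposition~\ref{removing trace} (whose proof transfers verbatim to $T(V)$, since it relied only on unitality and the existence of a trace $\tau$ with $\tau(\one)\ne 0$), the map $S$ sends a trace $\tau\in\HC^0(T(V))$ to a nonzero multiple of $\tau^{(2)}$ precisely when $\tau(\one)\ne 0$. Hence the kernel of $S$ is exactly the subspace of traces vanishing on~$\one$. Combined with the standard identification $\HH^0(T(V))=\HC^0(T(V))$ as the space of traces, this gives the three assertions in the theorem.

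The only slightly delicate step is the description of $S\tau^{(2k-2)}$; the obstacle is not conceptual but bookkeeping, and since Theorem~\ref{tensor-cyclic} already tells us that every even-degree cyclic cohomology group is one-dimensional and generated by a $\tau^{(2k)}$, it is enough to rule out $S\tau^{(2k-2)}=0$, which follows from the same naturality argument referenced in Theorem~\ref{simplicial vanishes}.
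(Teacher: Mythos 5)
Your argument is correct and is essentially the paper's own proof: the paper likewise just reruns the proof of Theorem~\ref{simplicial vanishes} for $T(V)$, using the Connes--Tzygan sequence, Theorem~\ref{tensor-cyclic}, the fact that $S\tau^{(2k-2)}$ is a nonzero multiple of $\tau^{(2k)}$, and the identification of $\ker S$ with the traces vanishing at~$\one$. Nothing further is needed.
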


\begin{proof} The key observation is that just as in Theorem~\ref{simplicial vanishes}, we know that the cyclic cohomology is zero, in odd dimensions, and is one dimensional and given by a trace, in even dimensions (greater than~$0$). (In Theorem~\ref{simplicial vanishes}, we knew these facts to hold for~$\ell^1(\Cuntz)$.)
The proof remains valid for $T(V)$, based on the same facts, and we obtain the result.
\end{proof}

Now that we have identified the space of traces on the $T(\ell^1_m)$ in~Lemma~\ref{invariants}, and hence also on the isomorphic algebra~$\ell^1(\FS)$, we can quickly describe the traces on $\ell^1(\Cuntz)$.
We will denote the two copies of the free semigroup algebra in $\ell^1(\Cuntz)$,
which are generated by $p_1, \ldots, p_m$ (and the other by $q_1, \ldots, q_m$), by $\ell^1(\F_p)$, (respectively $\ell^1(\F_q)$).

Note that each of these algebras is isomorphic to $\ell^1(\FS)$, for which we know the space of traces.

\begin{Lemma}\label{hh0}
The space of traces on the
reduced Cuntz algebra $\AC$  is isomorphic to the subspace of the direct sum of the traces on $\ell^1(\F_p)$ and $\ell^1(\F_q)$, which agree on the unit elements.
\end{Lemma}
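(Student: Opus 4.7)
The plan is to show that the restriction map
\[ \Phi : \tau \mapsto \bigl(\tau|_{\ell^1(\F_p)},\ \tau|_{\ell^1(\F_q)}\bigr) \]
is a linear isomorphism from the space of traces on $\AC$ onto the claimed subspace, and to produce an explicit inverse. Clearly $\Phi(\tau)$ consists of a pair of traces on the two subalgebras, and they agree on $\one$ since $\one$ lies in both.

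To see injectivity, and to motivate the inverse, I would first compute $q_\beta p_\alpha$ in $\AC$ using $q_i p_j = \delta_{ij}\one$ together with the fact that in the reduced algebra any product of non-matching paths collapses to~$0$. One finds that exactly one of the following holds: $q_\beta p_\alpha = 0$; or $q_\beta p_\alpha = p_\gamma \in \ell^1(\F_p)$ (when $\alpha = \beta\gamma$ with $\gamma$ nonempty); or $q_\beta p_\alpha = q_\gamma \in \ell^1(\F_q)$ (when $\beta = \alpha\gamma$ with $\gamma$ nonempty); or $q_\beta p_\alpha = \one$ (when $\alpha = \beta$). Since the trace property gives $\tau(p_\alpha q_\beta) = \tau(q_\beta p_\alpha)$, the value of $\tau$ on every basis element of $\AC$ is forced by $\Phi(\tau)$, yielding injectivity.

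For surjectivity, given a compatible pair $(\tau_p,\tau_q)$, I would first glue them into a bounded linear functional $\tilde\tau$ on the subspace $\ell^1(\F_p) + \ell^1(\F_q) \subseteq \AC$; this is well defined because the two subalgebras intersect only in $\C\one$, on which $\tau_p$ and $\tau_q$ coincide by hypothesis. Then define $\tau : \AC \to \C$ on the canonical basis by $\tau(p_\alpha q_\beta) := \tilde\tau(q_\beta p_\alpha)$ and extend by linearity. Boundedness is immediate from $\|\tau\| \le \max(\|\tau_p\|, \|\tau_q\|)$, and the identities $\tau|_{\ell^1(\F_p)}=\tau_p$ and $\tau|_{\ell^1(\F_q)}=\tau_q$ follow by taking the other string to be empty.

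The main obstacle is verifying that the functional $\tau$ just constructed really is a trace, i.e.\ that $\tau(xy) = \tau(yx)$ on basis elements $x = p_\alpha q_\beta$ and $y = p_\gamma q_\delta$. The products $xy$ and $yx$ simplify inside $\AC$ according to the various prefix relations between the four strings involved, and the definition of $\tau$ then reduces both sides to explicit values of $\tilde\tau$. A finite case analysis on the prefix relations between $\alpha,\beta,\gamma,\delta$ shows that in each case the equality reduces either trivially (when one side is $0$ or involves $\one$) or to an instance of the trace identity for $\tau_p$ or $\tau_q$ in the appropriate subalgebra, after using the cancellation $q_\eta p_\eta = \one$ to absorb common prefixes. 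This case-checking is routine, but it is the only place where the construction could fail.
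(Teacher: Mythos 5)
Your skeleton is the same as the paper's: restrict a trace to the two free subalgebras, use $\tau(\palpha\qbeta)=\tau(\qbeta\palpha)$ together with the fact that $\qbeta\palpha$ is $0$, a $p_{\gamma}$, a $q_{\gamma}$ or $\one$ to get injectivity, and invert by defining $\tau(\palpha\qbeta):=\tilde\tau(\qbeta\palpha)$; the boundedness remark is fine since $\AC$ is an $\ell^1$-space on its canonical basis. The problem is the last step. You yourself identify the verification that the glued functional satisfies $\tau(xy)=\tau(yx)$ as ``the only place where the construction could fail,'' and then you do not carry it out: you only assert that a finite case analysis on prefix relations among $\alpha,\beta,\gamma,\delta$ is routine. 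That verification is the mathematical content of surjectivity, so as written the proposal has a genuine gap at its decisive point. (For the record, the case analysis can be pushed through -- e.g.\ if $\gamma=\beta\gamma'$ and $\alpha=\delta\alpha'$ both products reduce, after the cancellations $\qbeta p_{\beta\gamma'}=p_{\gamma'}$ and $q_{\delta}p_{\delta\alpha'}=p_{\alpha'}$, to the identity $\tau_p(p_{\alpha'\gamma'})=\tau_p(p_{\gamma'\alpha'})$, and one must also check that when one of $xy$, $yx$ is $0$ the other is sent to $0$ by $\tau$ -- but there are genuinely several configurations and none of this is stated in your argument.)

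The paper avoids the case explosion with a short induction that you may prefer: first observe $\tau(p_i c q_j)=\delta_{ij}\tau(c)$ for any basis word $c$, then induct on the total length of $a$ and $b$. Writing $a=p_i a'$ and $b=b'q_j$ one gets
\[ \tau(ab)=\tau(p_i a' b' q_j)=\delta_{ij}\tau(a'b')=\delta_{ij}\tau(b'a')=\tau(b'q_j p_i a')=\tau(ba), \]
and after interchanging $a$ and $b$ the only base cases left are those where $a$ and $b$ both consist solely of $p$'s or solely of $q$'s, where the identity is exactly the trace property of $\tau_p$ or $\tau_q$. If you replace your ``routine case-checking'' paragraph either by this induction or by an explicitly enumerated prefix analysis, the proof is complete and matches the statement.
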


\begin{proof} Clearly any trace $\tau$ on $\AC$, induces a trace on each of $\ell^1(\F_p)$ and $\ell^1(\F_q)$ by restriction. Further, the two restrictions agree on the corresponding unit elements.
This defines the map from $\HC^0(\AC)$ to $\HC^0(\ell^1(\F_p))\oplus \HC^0(\ell^1(\F_q))$.

Next we see that this map is injective. Note that the trace $\tau$ has the same value on $a=\palpha \qbeta$ and on $\qbeta \palpha$. However, such a product is equal to either $q_{\beta'}$ for some non empty string $\beta'$, or $p_{\alpha'}$ for some non empty string $\alpha'$, or it is $\one$ or is zero. In any case the value is determined by the value of one of the restrictions, and so the map in injective.

We must now show that the map is surjective. Namely, that any pair of traces $\tau_p \in \HC^0(\ell^1(\F_p))$ and $\tau_q \in \HC^0(\ell^1(\F_q))$ such that $\tau_p(\one) = \tau_q(\one)$, determine a trace on $\AC$.
It is clear from the argument above that the values of such a $\tau$ are determined by 
$ \tau(\palpha \qbeta) = \tau(\qbeta \palpha)$, which is given by either $\tau_p$ or $\tau_q$.

We must now verify that~$\tau$ defined in this manner is indeed a trace. When checking that $\tau(ab) = \tau(ba)$, each of the products $ab$ and $ba$ may give rise to a canceling of $q$'s and $p$'s, and each of the resulting product may (a priori) require a different value, taken from either $\tau_p$ or $\tau_q$. We need to ensure that the values always agree.

There are several cases to consider. To begin, we observe that $\tau(p_i c q_j) = \delta_{ij} \tau(c)$. We proceed by induction on the total length of the words $a$ and $b$. Consider the case $a = p_i a'$ and $b = b' q_j $ and assume that the results holds for the shorter words $a'$, $b'$. We have 
 $$\tau(  a b ) = \tau( p_i a' b' q_j ) = \delta_{ij} \tau( a' b') = \delta_{ij} \tau( b' a') = \tau( b' q_j p_i a') = \tau(b a).$$
 
Interchanging $a$ and $b$ shows that to base our induction we only need to consider the cases when $a$ and $b$ consist only of $p_i$'s or $q_j$'s, and for these the result is clear. Hence $\tau$ is a trace.
%
\end{proof}

\section{Conclusion}
The results of this paper should be considered to be a first step in the analysis of the
$\ell^1$ and $L^1$ algebras associated with the various generalizations of Cuntz algebras and semigroups.
The next step should be the Cuntz-Krieger algebras and other digraph and quiver type algebras.
This should also include examples like the free semigroup algebras considered here in Theorem~\ref{tensor-sim},
which are not inverse semigroups.
In pure algebra many of these algebras have short resolutions which quickly show that all of the homology of
algebras is supported in low degrees.
Such resolutions are not available for the $\ell^1$-completions of these algebras as is evidenced by the fact that
they do not have all of the cohomology supported in the same low degrees as the algebraic analogues would suggest.
For example: the free semigroups on a single generator $Z_+$ gives rise to the algebra of polynomials in a single variable,
which has dimension~$1$ in pure homology.
However, it is known that the completion of this algebra~$\ell^1(Z_+)$, has non trivial cohomology in dimension~$2$, as is shown in~\cite{[DaDu]}.

{\em Acknowledgment.} The first author thanks the University of Newcastle for its kind hospitality while this paper was being written, and gratefully acknowledges the support of the Natural Sciences and Engineering Research Council of Canada.

\bigskip


\begin{thebibliography}{}


\bibitem{[BoDu]} S. Bowling and J. Duncan, {\it First order cohomology  of Banach semigroup algebras,}
Semigroup Forum Vol. 56 (1998) 130-145.

\bibitem{[Co]} A. Connes, Noncommutative Geometry, Academic Press, 1994.

\bibitem{[C2010]}  Y. Choi, {\it The cyclic cohomology of biflat algebras, revisited.} Preprint, see arXiv math/0910.5044.

\bibitem{[CGW]} Y. Choi, F. Gourdeau and M.C. White, {\it Simplicial cohomology of Band semigroup algebras}, Preprint, 2010, 29p.

\bibitem{[Cu]} J. Cuntz, {\it Simple C*-algebras generated by isometries}, Comm. Math. Phys. 57, 173--185 (1977).

\bibitem{[CK]} J. Cuntz and W. Krieger, {\it A class of C*-algebras and topological Markov chains}, Invent. Math., 56 (1980), 251--268.


\bibitem{[DaDu]} H. G. Dales and J. Duncan, Second order cohomology in
groups of some semigroup algebras, Banach Algebras `97, Proc.
13th. Internat. Confer. on Banach Algebras, 101--117, Walter de
Gruyter, Berlin, 1998.

%


\bibitem{[GJW]}  F. Gourdeau, B. E. Johnson and M. C. White, {\it The cyclic and simplicial cohomology of
$\ell^1(\mathbb{N})$}, Trans. Amer. Math. Soc., 357, (2005)
5097-5113.

\bibitem{[GW2010]} F. Gourdeau and M.C. White, {\it The cyclic and simplicial cohomology of the bicyclic semigroup algebra}, to appear in Quarterly Journal of Mathematics (2010).



\bibitem{[He92]} A. Ya. Helemskii, Banach cyclic (co)homology and the
Connes-Tzygan exact sequence, {\it J. London Math. Soc.}, (2) 46,
(1992), 449-462.



\bibitem{[Ly98]} Z. Lykova, {\it Excision in Banach simplicial and cyclic cohomology}, Proc. Edinburgh Math. Soc. (2) 41 (1998), no. 2, 411--427.

\bibitem{[Pa]} Alan L. T. Paterson, Groupoids, inverse semigroups, and their operator algebras. Progress in Mathematics, Vol. 170, Birkh\"auser, Boston, 1998.

\bibitem{[RS]} G. Robertson and T. Steger, {\it Affine buildings, tiling systems and higher rank Cuntz-
Krieger algebras}, J. Reine Angew. Math. 513 (1999), 115--144.






\end{thebibliography}
\end{document}